\def\ep{\varepsilon}
\def\th{\vartheta}
\newtheorem{tm}{Theorem}[section]
\newtheorem{prop}{Proposition}[section]
{\theoremstyle{definition}
\newtheorem{rem}{Remark}[section]}
\begin{document}
{\large
\centerline{\normalsize \bf SMALL BALL PROBABILITIES FOR SMOOTH GAUSSIAN FIELDS} 
\centerline{\normalsize \bf  AND TENSOR PRODUCTS OF COMPACT OPERATORS }  
\bigskip 

\centerline {\small \rm BY ANDREI I. KAROL' AND ALEXANDER I. NAZAROV\footnote{Authors are 
partially supported by grants of RFBR No. 10-01-00154a and NSh. 4210.2010.1\\ 
\indent {\it Key words and phrases}. Small deviations; slowly varying functions;
smooth covariances; tensor product of operators; spectral asymptotics. 
}}

\bigskip

\centerline {\it St.Petersburg State University}

}

\bigskip

{ \it \footnotesize We find the logarithmic $L_2$-small ball
asymptotics for a class of zero mean Gaussian fields with
covariances having the structure of ``tensor product''. The main
condition imposed on marginal covariances is slow growth at the origin of 
counting functions of their eigenvalues. That is valid for
Gaussian functions with smooth covariances. Another type of marginal 
functions considered as well are classical Wiener process, Brownian
bridge, Ornstein--Uhlenbeck process, etc., in the case of special
self-similar measure of integration. Our results are based on a new 
theorem on spectral asymptotics for the tensor products of compact 
self-adjoint operators in Hilbert space which is of independent interest.
Thus, we continue to develop the approach proposed in the paper \cite{KNN}, 
where the regular behavior at infinity of marginal eigenvalues was assumed.
}

\bigskip


\bigskip

\section{Introduction} 

The theory of small deviations of random
functions is currently in intensive development. In this paper we
address small deviations of Gaussian random fields in $L_2$-norm.

Suppose we have a real-valued Gaussian random field $X(x)$, $x\in {\cal O}\subset
{\mathbb R}^d$, with zero mean and covariance function
$G_X(x,y)=EX(x)X(y)$ for $x,y\in{\cal O}$. Let $\mu$ be a finite measure 
on ${\cal O}$. Set
\[
\|X\|_{\mu}=\|X\|_{L_2({\cal O};\mu)}=(\int\limits_{\cal O} X^2(x)\ \mu(dx))^{1/2}
\]
(the subscript $\mu$ will be omitted when $\mu$ is the Lebesgue measure) 
and consider
\[
Q(X,\mu \ ;\ep)={\bf P}\{\|X\|_{\mu}\leq \ep \}.
\]
The problem is to evaluate the behavior of $Q(X,\mu\ ;\ep) $ as 
$\ep \rightarrow 0$. Note that the case 
$\mu(dx)=\psi(x)dx$, $\psi\in L_1({\cal O})$, can be easily reduced to the 
Lebesgue case $\rho\equiv1$ replacing $X$ by the Gaussian field $X \sqrt{\rho}$. 
In general case, by scaling, one can assume that $\mu({\cal O})=1$.

According to the well-known Karhunen-Lo\`eve expansion, we
have 
$$\|X\|_{\mu}^2 \overset{d}{=} \sum_{n=1}^\infty \lambda_n\xi_n^2,$$
where $\xi_n$, $n\in\mathbb N$, are independent standard
normal r.v.'s, and $\lambda_n>0$, $n\in\mathbb N$,
$\sum\limits_n\lambda_n <\infty $, are the eigenvalues of the
integral equation 
$$ \lambda f(x)=\int\limits_{\cal O}G_X(x,y)f(y)\mu(dy), \qquad x\in {\cal O}.
$$ 
Thus we arrive to the equivalent problem of studying the asymptotic behavior 
of ${\bf P}\left\{\sum_{n=1}^\infty \lambda _n \xi_n^2\leq\ep ^2 \right\}$
as $\ep \to 0+ $. The answer heavily depends on the available information about the
eigenvalues $\lambda_n$.

The study of small deviation problem was initiated by Sytaya \cite{S}
and continued by a number of authors. See the history of the problem 
and the summary of main results in two excellent reviews 
\cite{Lf} and \cite{LiS}. The references to the latest results on 
$L_2$-small deviation asymptotics can be found on the site 
\cite{site}.\medskip

In this paper we continue to study the small deviation asymptotics of 
a vast and important class of Gaussian random fields having the
covariance of ``tensor product'' type. It means that this covariance
can be decomposed in a product of ``marginal'' covariances depending
on different arguments. The classical examples of such fields are
the Brownian sheet and the Brownian pillow. The notion of tensor 
products of Gaussian processes or Gaussian measures is known long 
ago. Such Gaussian fields are also studied in related domains, see, 
e.g., \cite{PW} and \cite{LP}. We recall briefly the construction
of these fields.

Suppose we have two Gaussian random functions $X(x)$, $x \in {\mathbb R}^m$, 
and $Y(y)$, $y \in {\mathbb R}^n$, with zero means
and covariances $G_X(x,u)$, $x,u \in {\mathbb R}^m$, and
$G_Y(y,v)$, $y,v \in {\mathbb R}^n$, respectively. Consider
the new Gaussian function $Z(x,y)$, $x \in {\mathbb R}^m$, $y\in {\mathbb R}^n$, 
which has zero mean and the covariance
$$G_Z((x,y),(u,v)) =G_X(x,u)G_Y(y,v).$$
Such Gaussian function obviously exists, and the integral operator
with the kernel $G_Z$ is the tensor product of two ``marginal''
integral operators with the kernels $G_X$ and $G_Y$. 
Therefore we use in the sequel the notation $Z = X\otimes Y$ and 
we call the Gaussian field $Z$ {\it the tensor product} of the 
fields $X$ and $Y$. The generalization to the multivariate case 
when obtaining the fields $\underset{j=1}{\overset{d}\otimes} X_j$ 
is straightforward.\medskip

The investigations of small deviations of Gaussian random functions of 
this class were started in a classical paper \cite{Cs} where the logarithmic
$L_2$-small ball asymptotics was obtained for the Brownian sheet 
${\mathbb W}_d={\mathbb W}_d(x_1,...,x_d)$ on the unit cube. 
This result was later extended by Li \cite{Li} to some other random fields. 

In a very interesting paper \cite{FT} the {\it exact}
asymptotics of small deviations in $L_2$-norm for the Brownian
sheet was obtained using the Mellin transform. However, it is not clear 
if the method of \cite{FT} yields small deviation results for a more general 
class of Gaussian fields.

A new approach developed in the paper \cite{KNN} is based on
abstract theorems describing the spectral asymptotics of tensor
products and of sums of tensor products for self-adjoint operators
in Hilbert space. This approach gave the opportunity to consider
quite general class of tensor products with the eigenvalues 
$\lambda_n^{(j)}$ of marginal covariances having the so-called 
{\it regular behaviour}: 
$$\lambda_n^{(j)}\sim\frac {\varphi_j (n)} {n^{p_j}},\, n \to \infty,$$
where $p_j>1$ and $\varphi_j$ are some {\it slowly varying functions} (SVFs).\medskip

In this paper we consider the case where $\lambda_n^{(j)}$ have faster rate
of decreasing. To be more precise, we assume that the so-called {\it counting functions}
$${\cal N}_j(t)=\#\{n:\ \lambda_n^{(j)}>t\}$$
are SVFs. Such behavior of eigenvalues is typical for processes with smooth covariances,
see, e.g., \cite{Na1}.\medskip

The structure of the paper is as follows. In \S 2 we
present some auxiliary information about slowly varying functions.
Next, in \S 3 we prove new results on the spectral asymptotics
for tensor products of compact self-adjoint operators with slowly varying
counting functions. Then, in \S 4, using the result of \cite{Na1}, we
evaluate the small ball constants for the special rate of eigenvalues decay, 
namely,
$${\cal N}(t) \sim \ln^p(1/t)\Phi(\ln(1/t)) \,,\qquad t\to 0+,$$
with $p\ge0$ and $\Phi$ being a SVF. Finally, we apply our theory to various 
specific examples of Gaussian random fields.

\section{ Auxiliary information on SVFs} 
We recall that a positive
function $\varphi(\tau)$, $\tau>0$, is called a {\it slowly
varying} function (SVF) at infinity if for any $c>0$
\begin{equation}\label{slow}
\varphi(c\tau)/\varphi(\tau)\to 1 \qquad \mbox{as}\quad
\tau\to+\infty.
\end{equation}

It is easily seen that any smooth positive function $\varphi$
satisfying $\tau\varphi'(\tau)/\varphi(\tau)\to 0$ as
$\tau\to+\infty$ is slowly varying. This test shows that the
functions equal to $\ln^p(\tau)\ln^{\varkappa}(\ln(\tau))$ for $\tau>>1$
are slowly varying.\medskip

We need some simple properties of SVFs. Their proofs can be found,
for example, in \cite{Se}.

\begin{prop}\label{SVF}
Let $\varphi$ be a SVF. Then following statements are true:

{\bf 1}. The relation (\ref{slow}) is uniform with respect to $c\in[a,b]$
for $0<a<b<+\infty$.

{\bf 2}. There exists an equivalent SVF\ $\widehat\varphi\in {\mathcal
C}^2({\mathbb R}_+)$ (i.e. $\frac {\widehat\varphi(\tau)}{\varphi(\tau)}\to1$ as
$\tau\to+\infty$) such that
\begin{equation}\label{equiv_slow}
\tau\cdot(\ln(\widehat\varphi))'(\tau)\to 0,\qquad
\tau^2\cdot(\ln(\widehat\varphi))''(\tau)\to 0,\qquad\tau\to+\infty.
\end{equation}

{\bf 3}. The function $\tau\mapsto\tau^p\varphi(\tau)$, $p>0$, up to equivalence
at infinity, is monotone for large $\tau$,
and its inverse function is $\tau\mapsto\tau^{1/p}\phi(\tau)$, where $\phi$ is a
SVF.
\end{prop}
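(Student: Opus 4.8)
The plan is to establish the three statements in the stated order, feeding each into the next; the only genuinely non-routine ingredient --- and the step I expect to be the main obstacle --- is statement \textbf{1}, which is the Uniform Convergence Theorem for slowly varying functions. I would prove it essentially as in \cite{Se}: passing to $f(x)=\ln\varphi(e^{x})$, the relation (\ref{slow}) becomes $f(x+u)-f(x)\to0$ as $x\to+\infty$ for every fixed $u$, and the claim is that this holds uniformly for $u$ in a compact interval $[\ln a,\ln b]$. This is obtained from a Baire-category argument: for $\ep>0$ the sets $A_{m}=\{u>0:\ |f(x+u)-f(x)|\le\ep\ \text{for all}\ x\ge m\}$ increase, as $m\to\infty$, to all of $(0,\infty)$, so some $A_{m_{0}}$ is non-meagre; a Steinhaus-type overlap argument for $A_{m_{0}}$, together with the additive relation, then yields the required uniform smallness. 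Everything below is a direct computation once statement \textbf{1} is available.

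For statement \textbf{2} I would mollify on a logarithmic scale. Fix a nonnegative $\rho\in C_{0}^{\infty}(\mathbb{R})$ with $\int_{\mathbb{R}}\rho(t)\,dt=1$ and, for $\tau\ge\tau_{0}$ ($\tau_{0}$ fixed large), put
\[
\widehat\varphi(\tau)=\int_{\mathbb{R}}\varphi(\tau e^{t})\,\rho(t)\,dt=\int_{0}^{\infty}\varphi(u)\,\rho(\ln u-\ln\tau)\,\frac{du}{u},
\]
extending $\widehat\varphi$ smoothly and positively to $(0,\tau_{0}]$. From the second form one may differentiate under the integral sign arbitrarily often --- the compact support of $\rho$ confines $u$ to a fixed compact logarithmic neighbourhood of $\tau$, where $\varphi$ is bounded for $\tau$ large by statement \textbf{1} --- so $\widehat\varphi\in C^{\infty}(\mathbb{R}_{+})$ and $\widehat\varphi>0$. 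By statement \textbf{1}, $\varphi(\tau e^{t})/\varphi(\tau)\to1$ uniformly for $t\in\operatorname{supp}\rho$, hence $\widehat\varphi(\tau)/\varphi(\tau)\to1$, i.e.\ $\widehat\varphi$ is equivalent to $\varphi$. Differentiating the first representation gives
\[
\tau\widehat\varphi'(\tau)=-\int_{\mathbb{R}}\varphi(\tau e^{t})\,\rho'(t)\,dt,\qquad \tau^{2}\widehat\varphi''(\tau)=\int_{\mathbb{R}}\varphi(\tau e^{t})\,\bigl(\rho'(t)+\rho''(t)\bigr)\,dt,
\]
and dividing by $\widehat\varphi(\tau)\sim\varphi(\tau)$, using statement \textbf{1} again along with $\int_{\mathbb{R}}\rho'(t)\,dt=\int_{\mathbb{R}}\rho''(t)\,dt=0$, yields $\tau\widehat\varphi'(\tau)/\widehat\varphi(\tau)\to0$ and $\tau^{2}\widehat\varphi''(\tau)/\widehat\varphi(\tau)\to0$. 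Since $(\ln\widehat\varphi)'=\widehat\varphi'/\widehat\varphi$ and $(\ln\widehat\varphi)''=\widehat\varphi''/\widehat\varphi-(\widehat\varphi'/\widehat\varphi)^{2}$, both relations in (\ref{equiv_slow}) follow.

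For statement \textbf{3} I would first invoke statement \textbf{2} to replace $\varphi$ by an equivalent $C^{2}$ SVF $\widehat\varphi$ obeying (\ref{equiv_slow}); then $\tau^{p}\varphi(\tau)\sim\tau^{p}\widehat\varphi(\tau)=:f(\tau)$ and $f'(\tau)=\tau^{p-1}\widehat\varphi(\tau)\,\bigl(p+\tau(\ln\widehat\varphi)'(\tau)\bigr)>0$ for large $\tau$, because $\tau(\ln\widehat\varphi)'(\tau)\to0<p$; thus $f$ is eventually strictly increasing and has an inverse $g$ on a neighbourhood of $+\infty$. From $y=f(\tau)$ we get $y^{1/p}=\tau\,\widehat\varphi(\tau)^{1/p}$, i.e.\ $g(y)=\tau=y^{1/p}\phi(y)$ with $\phi(y):=\widehat\varphi(g(y))^{-1/p}$. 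It remains to see $\phi$ is slowly varying: for fixed $c>0$ and any $\ep\in(0,1)$, slow variation of $\widehat\varphi$ gives $f\bigl(c^{1/p}(1\pm\ep)\tau\bigr)/f(\tau)\to c(1\pm\ep)^{p}$, and since $c(1-\ep)^{p}<c<c(1+\ep)^{p}$, the monotonicity of $f$ traps $g\bigl(cf(\tau)\bigr)/\tau$ between $c^{1/p}(1-\ep)$ and $c^{1/p}(1+\ep)$ for large $\tau$; letting $\ep\to0$ gives $g(cy)/g(y)\to c^{1/p}$, whence $\phi(cy)/\phi(y)=c^{-1/p}\,g(cy)/g(y)\to1$, as required.
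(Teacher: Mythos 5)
Your proof is correct, and it is essentially the standard argument from Seneta's book, which is exactly what the paper does: the authors give no proof of this proposition, referring the reader to \cite{Se} instead. All three of your steps (the Baire/Steinhaus argument for uniform convergence, logarithmic mollification for the smooth equivalent, and the sandwich argument for inverting $\tau^p\varphi(\tau)$) are the canonical ones, and the computations you give for parts \textbf{2} and \textbf{3} check out. The one point worth making explicit is that part \textbf{1} --- and hence everything downstream --- requires $\varphi$ to be measurable (or to have the Baire property): the Uniform Convergence Theorem is false for arbitrary positive functions satisfying (\ref{slow}), and in your category argument the sets $A_m$ are only guaranteed to be non-meagre-detectable under such a hypothesis (for Lebesgue-measurable $f$ one argues instead that some $A_{m_0}$ has positive measure and applies Steinhaus there). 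This hypothesis is implicit in the paper's usage and explicit in \cite{Se}, so it is a matter of stating an assumption rather than a gap in the reasoning.
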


For two nondecreasing and unbounded SVFs $\varphi$ and $\psi$, we define their {\it asymptotic convolution}
$$(\varphi\star\psi)(\tau)=\int\limits_1^{\tau} \varphi( \tau / \sigma)\,d\psi(\sigma).
$$

\begin{rem}
It is easy to see that the asymptotic convolution is connected with the Mellin convolution (see 
\cite[\S 2]{KNN}) by the relation 
$$(\varphi\star\psi)(\tau)=(\varphi*\psi_1)(\tau),\qquad \mbox{where}\qquad \psi_1(\tau)=\tau\psi'(\tau).$$
Therefore, basic properties of the asymptotic convolution could be extracted from \cite[Theorem 2.2]{KNN}. 
However, for the reader's convenience, we give them with full proofs.
\end{rem}

\begin{tm}\label{convolution}
The following statements are true:

{\bf 1}. $(\varphi\star\psi)(\tau)\le \varphi(\tau)\psi(\tau)$.

{\bf 2}. $\varphi(\tau) = o((\varphi\star\psi)(\tau))$ as $\tau\to+\infty$.

{\bf 3}. Asymptotic symmetry:
$$(\varphi\star\psi)(\tau) = (\psi\star\varphi)(\tau) + O(\varphi(\tau)+\psi(\tau)),\qquad \tau\to+\infty.
$$

{\bf 4}. If $\varphi(\tau)=\widehat\varphi(\tau)\cdot(1+o(1))$ as $\tau\to+\infty$, then
$$(\varphi\star\psi)(\tau)=(\widehat\varphi\star\psi)(\tau)\cdot(1+o(1)),\qquad \tau\to+\infty.$$

{\bf 5}. $\varphi\star\psi$ is a nondecreasing SVF.
\end{tm}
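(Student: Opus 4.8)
The plan is to prove the five items in the order listed, reducing the ``asymptotic convolution'' to an ordinary convolution on the half-line through the logarithmic substitution $\sigma=e^u$, $\tau=e^t$: with $\widetilde\varphi(t)=\varphi(e^t)$ and $\widetilde\psi(t)=\psi(e^t)$ (both nondecreasing and unbounded) one has $(\varphi\star\psi)(e^t)=F(t):=\int_0^t\widetilde\varphi(t-u)\,d\widetilde\psi(u)$. Item~1 follows at once: $\sigma\ge1$ gives $\tau/\sigma\le\tau$, hence $\varphi(\tau/\sigma)\le\varphi(\tau)$ by monotonicity, and so $(\varphi\star\psi)(\tau)\le\varphi(\tau)\int_1^\tau d\psi=\varphi(\tau)\,(\psi(\tau)-\psi(1))\le\varphi(\tau)\psi(\tau)$. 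For item~2 I would fix $A>1$, keep only $\sigma\in[1,A]$ where $\tau/\sigma\ge\tau/A$, and use monotonicity to obtain $(\varphi\star\psi)(\tau)\ge\varphi(\tau/A)\,(\psi(A)-\psi(1))$; dividing by $\varphi(\tau)$ and letting $\tau\to\infty$ (so that $\varphi(\tau/A)/\varphi(\tau)\to1$ by slow variation) gives $\liminf_{\tau}(\varphi\star\psi)(\tau)/\varphi(\tau)\ge\psi(A)-\psi(1)$, and then $A\to\infty$ finishes item~2 since $\psi$ is unbounded.

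For item~3 I would avoid differentiating anything. Writing $\widetilde\varphi(t-u)=\widetilde\varphi(0)+\int_{(0,\,t-u]}d\widetilde\varphi(r)=\widetilde\varphi(0)+\int_{(0,t]}\mathbf 1\{u\le t-r\}\,d\widetilde\varphi(r)$ and applying Fubini's theorem to the two nonnegative Stieltjes measures $d\widetilde\varphi$ and $d\widetilde\psi$ interchanges the two functions; collecting the elementary boundary contributions one arrives at $(\varphi\star\psi)(\tau)=(\psi\star\varphi)(\tau)+\varphi(1)\psi(\tau)-\psi(1)\varphi(\tau)$ up to terms of order $O(\varphi(\tau)+\psi(\tau))$, which is item~3. (One could instead pass to the $C^2$ equivalents of Proposition~\ref{SVF}.2 and integrate by parts, but the Fubini argument needs no regularity.) Combining items~2 and~3 I would record the auxiliary fact $\psi(\tau)=o((\varphi\star\psi)(\tau))$, used repeatedly below: from $\varphi=o(\varphi\star\psi)$, $\psi=o(\psi\star\varphi)$ and $|(\varphi\star\psi)-(\psi\star\varphi)|\le C(\varphi+\psi)$ a short bootstrap (bound $\psi$ by a vanishing multiple of $\psi\star\varphi$, and the latter by $(\varphi\star\psi)(1+o(1))$) yields it; symmetrically $\varphi(\tau)=o((\psi\star\varphi)(\tau))$.

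Item~4 is a splitting argument. Write $\varphi=\widehat\varphi\,(1+\eta)$ with $\eta(\tau)\to0$, so that $(\varphi\star\psi)(\tau)-(\widehat\varphi\star\psi)(\tau)=\int_1^\tau\widehat\varphi(\tau/\sigma)\,\eta(\tau/\sigma)\,d\psi(\sigma)$. Given $\delta>0$, pick $T$ with $|\eta|<\delta$ on $[T,\infty)$. On $\sigma\in[1,\tau/T]$ the integrand is $\le\delta\,\widehat\varphi(\tau/\sigma)$, contributing at most $\delta\,(\widehat\varphi\star\psi)(\tau)$; on $\sigma\in[\tau/T,\tau]$ the argument $\tau/\sigma$ ranges over the compact set $[1,T]$, so the integrand is bounded there by a constant $C_T$, while $\int_{\tau/T}^\tau d\psi=\psi(\tau)-\psi(\tau/T)=o(\psi(\tau))$ by slow variation, which by the auxiliary fact is $o((\widehat\varphi\star\psi)(\tau))$. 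Letting $\delta\to0$ proves item~4.

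Item~5 is where the real work sits. Monotonicity of $F$ is immediate from the half-line form, since $F(t_2)-F(t_1)=\int_0^{t_1}[\widetilde\varphi(t_2-u)-\widetilde\varphi(t_1-u)]\,d\widetilde\psi(u)+\int_{t_1}^{t_2}\widetilde\varphi(t_2-u)\,d\widetilde\psi(u)\ge0$ for $t_2>t_1$. For slow variation it suffices to prove $F(t+a)/F(t)\to1$ for each fixed $a>0$ (translation by $a$ in $t$ corresponds to dilation by $e^a$ in $\tau$). I would split $F(t+a)-F(t)$ into the boundary term $\int_t^{t+a}\widetilde\varphi(t+a-u)\,d\widetilde\psi(u)\le\widetilde\varphi(a)\,(\widetilde\psi(t+a)-\widetilde\psi(t))=o(\psi(\tau))=o(F(t))$ (slow variation of $\psi$, then the auxiliary fact) and the bulk term $\int_0^t[\widetilde\varphi(t+a-u)-\widetilde\varphi(t-u)]\,d\widetilde\psi(u)$; in the latter I split the $u$-range at $u=t-T$, bounding $\widetilde\varphi(t+a-u)-\widetilde\varphi(t-u)$ by $\delta\,\widetilde\varphi(t-u)$ when $t-u\ge T$ (slow variation of $\varphi$, $T=T(\delta)$), which contributes $\le\delta F(t)$, and by the constant $\widetilde\varphi(T+a)$ when $t-u\le T$, which contributes $\le\widetilde\varphi(T+a)(\widetilde\psi(t)-\widetilde\psi(t-T))=o(\psi(\tau))=o(F(t))$. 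Hence $|F(t+a)-F(t)|\le\delta F(t)+o(F(t))$, and $\delta\to0$ finishes. The crux throughout, and the main obstacle, is precisely this ``near-diagonal'' region where the argument $\tau/\sigma$ of $\varphi$ stays bounded and the slow variation of $\varphi$ gives nothing: the only way to dispose of it is via the slow variation of $\psi$ together with the lower bound $\psi(\tau)=o((\varphi\star\psi)(\tau))$ extracted from items~2 and~3. Everything away from the diagonal is bookkeeping once $\star$ has been turned into an honest convolution.
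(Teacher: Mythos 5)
Your proof is correct. Items \textbf{1}--\textbf{3} follow the paper's own route (for \textbf{3} you use Fubini for Stieltjes measures where the paper integrates by parts, but both yield the identity $(\varphi\star\psi)(\tau)=\varphi(1)\psi(\tau)-\psi(1)\varphi(\tau)+(\psi\star\varphi)(\tau)$, and your explicit derivation of the auxiliary fact $\psi(\tau)=o((\varphi\star\psi)(\tau))$ is exactly the content of the paper's Remark 2). For \textbf{4} and \textbf{5} your argument is genuinely different. The paper proves \textbf{4} by passing to the symmetric form $\int_1^{\tau/a}\varphi(\tau/\sigma)\,d\psi(\sigma)$, which keeps the argument of $\varphi$ bounded away from the lower endpoint; you instead split the original integral at $\sigma=\tau/T$ and kill the near-diagonal strip via $\psi(\tau)-\psi(\tau/T)=o(\psi(\tau))=o((\widehat\varphi\star\psi)(\tau))$ --- equally valid, though note your bound $C_T$ on $[\tau/T,\tau]$ tacitly uses local boundedness of $\widehat\varphi$ near the origin, which the paper's symmetric-form trick sidesteps. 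For \textbf{5} the paper first invokes \textbf{4} and Proposition 2.1, part \textbf{2}, to replace $\varphi,\psi$ by $C^2$ equivalents and then verifies the differential criterion $\tau(\varphi\star\psi)'(\tau)=o((\varphi\star\psi)(\tau))$; you work directly with the increment $F(t+a)-F(t)$ in logarithmic variables and need no smoothness at all. The two proofs rest on the identical decomposition (bulk region controlled by slow variation of $\varphi$, near-diagonal strip controlled by slow variation of $\psi$ plus $\psi=o(\varphi\star\psi)$), but your finite-difference version dispenses with the smoothing step, at the price of being slightly longer; the paper's derivative computation is more economical once the $C^2$ machinery is accepted.
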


\begin{rem}
Note that, by the statement {\bf 3}, the statements {\bf 2} and {\bf 4} hold true with replacing $\varphi$ by $\psi$.
\end{rem}

\begin{proof}
{\bf 1}. This fact is trivial, since $\varphi( \tau / \sigma)\le\varphi(\tau)$ for all 
$\sigma\in\,[1,\tau]$.\medskip

{\bf 2}. By Proposition \ref{SVF}, part~{\bf 1}, for any $a>1$ we have
$$(\varphi\star\psi)(\tau)>\int\limits_1^a \varphi(\tau /\sigma)\,d\psi(\sigma)= 
\varphi(\tau)(\psi(a)-\psi(1))\cdot(1+o(1)),\qquad \tau\to+\infty.
$$
Since $\psi$ is unbounded, the statement follows.\medskip

{\bf 3}. Integrating by parts and changing the variable, we obtain
$$(\varphi\star\psi)(\tau)=\varphi(1) \psi(\tau) - \varphi(\tau)\psi(1) + (\psi\star\varphi)(\tau).
$$
By {\bf 2}, the statement follows.\medskip

{\bf 4}. By {\bf 2} and {\bf 3}, for any $a>1$
$$(\varphi\star\psi)(\tau)\sim(\psi\star\varphi)(\tau)\sim
\int\limits_a^{\tau}\psi(\tau/\sigma)\,d\varphi(\sigma)\sim
\int\limits_1^{\frac {\tau}a}\varphi(\tau/\sigma)\,d\psi(\sigma),
\qquad\tau\to+\infty.$$
Given $\ep>0$, one can take $a$ so large that 
$1-\ep<\frac {\widehat\varphi(\lambda)}{\varphi(\lambda)}<1+\ep$ for $\lambda>a$, and the statement follows.\medskip

{\bf 5}. Due to {\bf 4} and to Proposition \ref{SVF}, part~{\bf 2}, we can assume $\varphi$ and $\psi$ smooth. 
We have
 $$\tau\cdot(\varphi\star\psi)'(\tau) = \tau\psi '(\tau) \varphi(1) +
 \int\limits_1^\tau\frac{\tau}{\sigma}\cdot\varphi '(\tau/\sigma)\,d\psi(\sigma).
 $$
By {\bf 2}, 
$$\tau\psi'(\tau)=o(\psi(\tau))=o((\varphi\star\psi)(\tau)),\qquad \tau\to+\infty.$$
 Next, due to (\ref{equiv_slow}) we have for any $a>1$ and $\tau>a$
\begin{multline*}
\left|\int\limits_1^\tau\frac{\tau}{\sigma}\cdot\varphi '(\tau/\sigma)\,d\psi(\sigma)\right|\le
\left|\int\limits_1^{\frac {\tau}a}\frac{\tau}{\sigma}\cdot\varphi '(\tau/\sigma)\,d\psi(\sigma)\right|+
\left|\int\limits_{\frac {\tau}a}^\tau\frac{\tau}{\sigma}\cdot\varphi '(\tau/\sigma)\,d\psi(\sigma)\right|\le\\
\le \sup\limits_{\lambda\ge a}\left|\frac {\lambda\varphi'(\lambda)}{\varphi(\lambda)}\right|\cdot (\varphi\star\psi)(\tau)+\sup\limits_{\lambda\le a}|\lambda\varphi'(\lambda)|\cdot\psi(\tau).
\end{multline*}
By Proposition \ref{SVF}, part~{\bf 2}, given $\ep>0$, one can take $a$ so large that 
$\left|\frac {\lambda\varphi'(\lambda)}{\varphi(\lambda)}\right|<\ep$ for $\lambda>a$. 
This gives, subject to {\bf 2},
$\tau\cdot(\varphi\star\psi)'(\tau)=o((\varphi\star\psi)(\tau))$ as $\tau\to+\infty$, and the statement follows.
\end{proof}

{\bf Example 1}. Let 
$$\varphi(\tau)=\ln^\alpha(\tau)\cdot\Phi(\ln(\tau)),\qquad
\psi(\tau)=\ln^\beta(\tau)\cdot\Psi(\ln(\tau)),
$$ 
where $\alpha,\ \beta\ge0$ while $\Phi$ and $\Psi$ are SVFs\footnote{If $\alpha=0$ (respectively, $\beta=0$), then we require
in addition that $\Phi$ (respectively, $\Psi$) is nondecreasing and unbounded.}.

Then, as $\tau\to+\infty$,
\begin{equation}\label{conv_pq}
(\varphi\star\psi)(\tau)\sim\frac {\Gamma(\alpha+1)\Gamma(\beta+1)}{\Gamma(\alpha+\beta+1)}\cdot\varphi(\tau)\psi(\tau).
\end{equation}

\begin{proof}
Changing the variable we obtain
$$(\varphi\star\psi)(\tau)=
\int\limits_0^{\ln(\tau)}(\ln(\tau)-s)^\alpha\,\Phi(\ln(\tau)-s)\,d\big[s^\beta\,\Psi(s)\big].
$$

First, let at least one of exponents $\alpha$ and $\beta$ be positive. By Theorem \ref{convolution}, part~{\bf 3}, one can assume that $\beta>0$. Then, substituting $s=\ln(\tau)\th$ we get
\begin{multline*}
(\varphi\star\psi)(\tau)=\varphi(\tau)\psi(\tau)\times\\
\times\int\limits_0^1(1-\th)^\alpha\,\frac {\Phi(\ln(\tau)(1-\th))}{\Phi(\ln(\tau))}\cdot
\th^{\beta-1}\,\frac {\beta\Psi(\ln(\tau)\th)+\ln(\tau)\th\Psi'(\ln(\tau)\th)}{\Psi(\ln(\tau))}\,d\th.
\end{multline*}

By Proposition \ref{SVF}, part {\bf 3}, for any $\ep>0$ the function $T^{\ep}\Phi(T)$
increases for large $s$. Hence for $0<z<1$, $T>0$ we have 
$$\frac {\Phi(zT)}{\Phi(T)}=\frac 1{z^\ep}\cdot\frac {(zT)^{\ep}\Phi(zT)}{T^{\ep}\Phi(T)}\le 
\frac 1{z^\ep}.
$$
This estimate and a similar estimate for $\Psi$ give us the majorant required in Lebesgue
Dominated Convergence Theorem. Passing to the limit in the integral we obtain as
$\tau\to+\infty$
$$(\varphi\star\psi)(\tau)\sim\varphi(\tau)\psi(\tau)\cdot\int\limits_0^1(1-\th)^\alpha\cdot
\beta\th^{\beta-1}\,d\th,
$$
and we arrive at (\ref{conv_pq}).\medskip

Now let $\alpha=\beta=0$. Then for any $\delta\in\,]0,1[$
$$(\varphi\star\psi)(\tau)\ge\int\limits_0^{\delta\ln(\tau)}\Phi(\ln(\tau)-s)\,d\Psi(s)\ge
\Phi(\ln(\tau)(1-\delta))\cdot[\Psi(\ln(\tau)\delta)-\Psi(0)].
$$
By definition of SVF, we obtain
$$\liminf\limits_{\tau \to +\infty}\frac{(\varphi\star\psi)(\tau)}{\varphi(\tau)\psi(\tau)}\ge 1.
$$
Taking into account Theorem \ref{convolution}, part {\bf 1}, we arrive at (\ref{conv_pq}).
\end{proof}

{\bf Example 2}. Let 
$$\varphi(\tau)= \exp(a^{\frac 1{p'}}\ln^{\frac 1p}(\tau))\,\ln^{\alpha}(\tau)\cdot\Phi(\ln (\tau)),\qquad 
\psi(\tau)= \exp(b^{\frac 1{p'}}\ln^{\frac 1p}(\tau))\,\ln^{\beta}(\tau)\cdot\Psi(\ln (\tau)),
$$
where $a,b>0$, $\alpha,\beta\ge0$, $p>1$, $p'$ stands for the H\"older conjugate exponent,
while $\Phi$ and $\Psi$ are SVFs.

Then, as $\tau\to+\infty$,
\begin{equation}\label{conv_exp}
(\varphi\star\psi)(\tau)\sim\sqrt\frac{2\pi}{p-1}\ \frac
{a^{\alpha+\frac 12}b^{\beta+\frac 12}}{(a+b)^{\gamma +\frac 12}}\
\exp((a+b)^{\frac 1{p'}}\ln^{\frac 1p}(\tau))\,\ln^{\gamma}(\tau)\cdot \Phi(\ln (\tau))\Psi(\ln (\tau)),
\end{equation}
where $\gamma=\alpha+\beta+\frac 1{2p}$.\medskip

\begin{proof}
Similarly to Example 1, we have
\begin{multline*}
(\varphi\star\psi)(\tau) \sim {\frac {b^{\frac 1{p'}}}p}\,T^{\alpha+\beta+\frac 1p}\cdot\Phi(T)\Psi(T)\times\\
\times\int\limits_0^1 \exp( T^{\frac 1p}S(\vartheta))\cdot
(1-\vartheta)^\alpha\vartheta^{\beta-\frac 1{p'}}\cdot\frac {\Phi(T(1-\vartheta))}{\Phi(T)}\cdot 
\frac {\Psi(T\vartheta)}{\Psi(T)} \,d\vartheta,
\end{multline*}
where $T=\ln(\tau)$, 
$S(\vartheta) =  a^{\frac 1{p'}}(1-\vartheta)^{\frac 1p} + b^{\frac 1{p'}}\vartheta^{\frac 1p}$.

Denote by $\vartheta_\star$ the maximum point of $S(\vartheta)$. 
Then, using the Laplace method and Proposition \ref{SVF}, part~{\bf 1}, we have
$$(\varphi\star\psi)(\tau)\sim
{\frac {b^{\frac 1{p'}}}p}\,T^{\alpha+\beta+\frac 1{2p}}\cdot\Phi(T)\Psi(T)
\sqrt\frac{2\pi}{-S''(\vartheta_\star)}
\exp(T^{\frac 1p}S(\vartheta_\star))
(1-\vartheta_\star)^\alpha\vartheta_\star^{\beta-\frac 1{p'}},
$$

Direct calculation shows that
$$\vartheta^\star =\frac b{a+b}, \qquad S(\vartheta_\star) = (a+b)^{\frac 1{p'}},\qquad 
S^{''}(\vartheta^\star) = -\frac 1{pp'} \frac {(a+ b)^{3-{\frac 1p}}}{ab},$$ 
and we arrive at (\ref{conv_exp}).
\end{proof}

\section{Spectral asymptotics for tensor products of compact
self-adjoint operators}

We recall that for a compact self-adjoint nonnegative operator 
 ${\cal T}={\cal T}^*\ge0$ in a Hilbert space $H$ we denote by 
$\lambda_n =\lambda_n({\cal T})$ its positive eigenvalues
arranged in a nondecreasing sequence and repeated according to
their multiplicity. Also we introduce the counting function 
$${\cal N}(t)={\cal N}(t,{\cal T})=\#\{n: \lambda_n ({\cal T})>t\}.
$$ 

Note that in view of Proposition \ref{SVF}, part~{\bf 2}, any SVF 
arising in an asymptotic formula can be assumed smooth. 

\begin{tm}\label{ocenka} 
Let ${\cal T}$ and $\widetilde {\cal T}$ be compact self-adjoint nonnegative 
operators in Hilbert spaces $H$ and $\widetilde H$, respectively. Let
\begin{equation}\label{asymp}
{\cal N}(t)\sim \varphi(1/t),\qquad \widetilde {\cal N}(t)\equiv {\cal N}(t,\widetilde {\cal T})
\sim\widetilde\varphi(1/t),\qquad t\to+0,
\end{equation}
where $\varphi$ and $\widetilde\varphi$ are unbounded SVFs at infinity.

Then for any $\ep>0$ the operator ${\cal T}\otimes\widetilde {\cal T}$
in the space $H\otimes\widetilde H$ satisfies the following estimates:
\begin{equation}\label{ocenka_oplus}
{\cal N}_{\otimes}(t)\equiv{\cal N}(t,{\cal T}\otimes\widetilde{\cal T}) 
\lessgtr \alpha_{\pm}(\ep)
\cdot\left[ S(t,\ep)+\widetilde S(t,\ep)+
\int\limits_{\alpha_{\mp}(\ep)/\ep}^{\ep\tau}
\varphi(\tau/\sigma)\,d\widetilde\varphi(\sigma)\right] 
\end{equation}
uniformly with respect to $t>0$ (here $\tau$ stands for
$\alpha_{\pm}(\ep)/t$). For $\alpha_{\mp}(\ep)/\ep >\ep\tau$
the integral in (\ref{ocenka_oplus}) should be omitted.

In (\ref{ocenka_oplus}) $\alpha_{\pm}(\ep)\to1$ as $\ep\to+0$, and the functions
$S$, $\widetilde S$ satisfy the following relations as $t\to+0$:
\begin{equation}\label{S}
S(t,\ep)\sim \varphi(1/t)\widetilde\varphi(1/\ep);\qquad
\widetilde S(t,\ep)=o(\widetilde\varphi(1/t)). 
\end{equation}
\end{tm}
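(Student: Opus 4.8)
The natural strategy is to reduce the eigenvalue count of the tensor product to a double-counting over the marginal eigenvalues: since $\lambda_k({\cal T}\otimes\widetilde{\cal T})$ runs over all products $\lambda_i({\cal T})\lambda_j(\widetilde{\cal T})$, we have exactly
$$
{\cal N}_\otimes(t)=\#\{(i,j):\lambda_i({\cal T})\lambda_j(\widetilde{\cal T})>t\}
=\int_0^\infty {\cal N}\!\left(\tfrac{t}{s}\right)\,d\big(-\widetilde{\cal N}(s)\big),
$$
where the Stieltjes integral is taken over the (discrete) measure $-d\widetilde{\cal N}$. The plan is to feed the asymptotics (\ref{asymp}) into this identity. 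First I would fix $\ep>0$ and use the uniform convergence in (\ref{asymp}) — in the quantitative form that for some $\alpha_\pm(\ep)\to1$ one has $\alpha_-(\ep)\varphi(1/t)\le{\cal N}(t)\le\alpha_+(\ep)\varphi(1/t)$ once $1/t$ exceeds a threshold, and similarly for $\widetilde{\cal N}$ — to sandwich ${\cal N}_\otimes(t)$ between the corresponding integrals against $\varphi$ and $\widetilde\varphi$. The subtlety is that $\varphi$ describes ${\cal N}$ only in a neighbourhood of the origin, so the integral $\int{\cal N}(t/s)\,d(-\widetilde{\cal N}(s))$ must be split according to whether the argument $t/s$ is small (so the asymptotic applies), or $s$ itself is large (so $\widetilde{\cal N}$'s asymptotic applies but ${\cal N}(t/s)$ may be near ${\cal N}(0{+})$, i.e.\ essentially the full count, which is where $S$ comes from), or both arguments are in an intermediate regime (the source of $\widetilde S$).

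Concretely I would break the range of integration at $s=\ep$ and at $s=\ep t/\alpha_\mp(\ep)$ (equivalently $t/s=\alpha_\mp(\ep)/\ep$), giving three pieces. On $s\le\ep$ one has $t/s\ge t/\ep$, and integrating ${\cal N}(t/s)\le\alpha_+\varphi(s/t)$ against $d(-\widetilde{\cal N})$ over $(0,\ep]$ — noting that $\widetilde{\cal N}$ is supported on $\lambda_j(\widetilde{\cal T})$ which for small index are $O(1)$ — produces a term asymptotic to $\varphi(1/t)\widetilde{\cal N}(\ep)\sim\varphi(1/t)\widetilde\varphi(1/\ep)$; this is $S(t,\ep)$, and the required relation (\ref{S}) for $S$ follows from Proposition~\ref{SVF}, part~1 (uniformity of slow variation). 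On the intermediate range $\ep<s$ with $t/s>\alpha_\mp(\ep)/\ep$, i.e.\ $s$ bounded above by a quantity of order $t$, the count ${\cal N}(t/s)$ is between a constant and the full ${\cal N}(\alpha_\mp(\ep)/\ep\cdot\text{const})=O(1)$-independent-of-$t$ bound, while $\widetilde{\cal N}$ contributes $\widetilde{\cal N}$ evaluated at points of order $t$, i.e.\ $\widetilde\varphi(1/t)$-type growth; since ${\cal N}(t/s)$ stays bounded here the whole piece is $O(\widetilde\varphi(1/t))\cdot o(1)$, which gives $\widetilde S(t,\ep)=o(\widetilde\varphi(1/t))$. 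The main range $\alpha_\mp(\ep)/\ep\le t/s$, equivalently $s\le \ep t/\alpha_\mp(\ep)$ reversed — more precisely the range where both $t/s$ and $s$ are genuinely small-argument, i.e.\ $\alpha_\mp(\ep)/\ep\le\tau/\sigma$ with $\sigma=1/s$, $\tau=\alpha_\pm(\ep)/t$ — is where both asymptotics apply simultaneously, and bounding ${\cal N}(t/s)\le\alpha_+\varphi(s/t)$, $d(-\widetilde{\cal N}(s))\le\alpha_+ \,d\widetilde\varphi(1/s)$ and changing variables to $\sigma=1/s$ yields exactly $\alpha_+^2\int_{\alpha_-/\ep}^{\ep\tau}\varphi(\tau/\sigma)\,d\widetilde\varphi(\sigma)$, and symmetrically for the lower bound with $\alpha_-^2$; absorbing the square into the redefined $\alpha_\pm(\ep)$ (still $\to1$) gives (\ref{ocenka_oplus}).

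The upper and lower bounds are handled in parallel, with the inequalities reversed and the roles of $\alpha_+$ and $\alpha_-$ swapped in the limits of integration, which is the reason the inner limit is $\alpha_\mp(\ep)/\ep$ while the prefactor and outer limit carry $\alpha_\pm(\ep)$. The instruction ``for $\alpha_\mp(\ep)/\ep>\ep\tau$ the integral should be omitted'' corresponds to the degenerate case $t>\ep^2/\alpha_\pm$, where there is no genuinely-small-argument range and the entire count is already captured by $S+\widetilde S$. I would also verify the bookkeeping that the three pieces overlap only on sets of $\widetilde{\cal N}$-measure zero or are glued at the chosen breakpoints so that no double counting occurs, using monotonicity of ${\cal N}$ and $\widetilde{\cal N}$. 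I expect the genuine obstacle to be not any single estimate but the careful tracking of which of the two marginals plays the ``outer'' role in each regime and keeping the error terms honestly $o(\cdot)$ uniformly in $t$ for fixed $\ep$ — in particular, justifying that the intermediate piece is $o(\widetilde\varphi(1/t))$ requires that ${\cal N}(t/s)$ be uniformly bounded there, which hinges on the precise placement of the breakpoint $\alpha_\mp(\ep)/\ep$ and on $\varphi$ being finite (equivalently, on the threshold in the quantitative version of (\ref{asymp}) being chosen before $\ep$ is sent to zero).
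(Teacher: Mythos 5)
Your overall strategy --- writing ${\cal N}_\otimes(t)=\sum_j{\cal N}(t/\widetilde\lambda_j)$ and splitting the range of the marginal eigenvalues $\widetilde\lambda_j$ of $\widetilde{\cal T}$ into three regimes --- is exactly the paper's. But the execution has genuine gaps. First, the decomposition as written is incoherent: the term $S(t,\ep)\sim\varphi(1/t)\widetilde\varphi(1/\ep)$ must come from the finitely many \emph{large} eigenvalues $\widetilde\lambda_j\ge\ep$ (there are about $\widetilde\varphi(1/\ep)$ of them, each contributing ${\cal N}(t/\widetilde\lambda_j)\sim\varphi(1/t)$ by uniform slow variation, Proposition~\ref{SVF}, part~{\bf 1}), whereas you attribute it to the integral over $s\in(0,\ep]$, which in fact carries the main term. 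Likewise your ``intermediate range'' $\ep<s<\ep t/\alpha_\mp(\ep)$ is empty for small $t$; the regime producing $\widetilde S$ is $s$ \emph{below} roughly $t/\ep$ (equivalently $t/s$ bounded away from $0$, so that ${\cal N}(t/s)\le{\cal N}(\mathrm{const}\cdot\ep)$ is bounded), not $t/s>\alpha_\mp(\ep)/\ep$; you appear to have conflated $t/s$ with $\tau/\sigma=\alpha_\pm(\ep)s/t$. Moreover, even once ${\cal N}(t/s)$ is bounded there, the piece is a priori only $O(\widetilde\varphi(1/t))$; to get $o(\widetilde\varphi(1/t))$ one must observe that the relevant mass is a difference $\widetilde{\cal N}(c_1t)-\widetilde{\cal N}(c_2t)\sim\widetilde\varphi(1/(c_1t))-\widetilde\varphi(1/(c_2t))=o(\widetilde\varphi(1/t))$ by slow variation. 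Your ``$O(\widetilde\varphi(1/t))\cdot o(1)$'' asserts the conclusion without supplying this mechanism.

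Second, and more substantively, the step ``$d(-\widetilde{\cal N}(s))\le\alpha_+\,d\widetilde\varphi(1/s)$'' is false as an inequality between measures: $-d\widetilde{\cal N}$ consists of unit atoms at the points $\widetilde\lambda_j$ and cannot be dominated by a constant multiple of the (typically continuous) measure $d\widetilde\varphi(1/s)$; the pointwise relation $\widetilde{\cal N}(s)\sim\widetilde\varphi(1/s)$ from (\ref{asymp}) says nothing about increments on small scales. This is exactly where the paper does the real work: it introduces the inverse function $\widetilde\theta$ of $\widetilde\varphi$, uses (\ref{asymp}) to sandwich $\widetilde\lambda_n$ between $\alpha_\mp(\ep)\widetilde\theta(n)$ for $\widetilde\lambda_n<\ep$, and then compares the sum $\sum_n{\cal N}\big(t/\alpha_\pm(\ep)\widetilde\theta(n)\big)$ with an integral using monotonicity of the summand in $n$ (picking up the harmless boundary term $O(\varphi(1/t))$). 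Your argument needs this, or an equivalent summation by parts against the monotone function $s\mapsto{\cal N}(t/s)$, to be valid; as written, the passage from the Stieltjes integral against $-d\widetilde{\cal N}$ to the integral against $d\widetilde\varphi$ does not go through.
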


\begin{proof}
 We establish the upper bound for ${\cal N}_{\otimes}(t)$.
The lower estimate can be proved in the same way. We have
$${\cal N}_{\otimes}(t) =\sum\limits_n {\cal N}(t/\widetilde\lambda_n)=
\sum\limits_{\widetilde\lambda_n<\ep}{\cal N}(t/\widetilde\lambda_n)+S(t,\ep),$$

\noindent where
$$S(t,\ep)=\sum\limits_{\widetilde\lambda_n\ge\ep}
{\cal N}(t/\widetilde\lambda_n).$$ 
The asymptotics (\ref{S}) for the last sum follows from Proposition \ref{SVF}, part~{\bf 1}. 

Denote by $\widetilde\theta$ the inverse function for $\widetilde\varphi$. 
Then the second relation in (\ref{asymp}) implies $\widetilde\lambda_n/\widetilde\theta(n)\to1$ as $n\to\infty$, 
and we have 
$$\alpha_-(\ep)\widetilde\theta(n)\le\widetilde\lambda_n\le
\alpha_+(\ep)\widetilde\theta(n)$$ for $\widetilde\lambda_n<\ep$,
with $\alpha_{\pm}(\ep)\to1$ as $\ep\to+0$.

Using monotonicity of $\cal N$ we get
$$\sum\limits_{\widetilde\lambda_n<\ep} {\cal N}(t/\widetilde\lambda_n)\le
\sum\limits_{\widetilde\theta(n)<\ep\alpha_-^{-1}(\ep)} {\cal N}(t/\alpha_+(\ep)\widetilde\theta(n)),
$$ 
and by monotonicity of the function $n\mapsto {\cal N}(t/\alpha_+(\ep)\widetilde\theta(n))$ we estimate 
the sum by an integral:
\begin{equation}\label{q}
\sum\limits_{\widetilde\lambda_n<\ep} {\cal N}(t/\widetilde\lambda_n)\le 
{\cal N}\left(\frac {\alpha_-(\ep)t} {\alpha_+(\ep)\ep}\right)+
\int\limits_0^{\ep \alpha^{-1}_-(\ep)}{\cal N}(t/\alpha_+(\ep)\theta) (-d\widetilde\varphi(1/\theta)).
\end{equation}

The first term in (\ref{q}) is $O(\varphi(1/t))$. Therefore,
adding it to the term $S(t,\ep)$ we obtain the term
$\alpha_+(\ep)S(t,\ep)$ with $\alpha_+(\ep)\to1$ as $\ep\to 0+$.
Further, 
splitting the integral in (\ref{q}) and changing variables we obtain
\begin{equation}\label{qq}
\int\limits_{\ep}^{+\infty}{\cal N}(s)\,d\widetilde\varphi(\alpha_+(\ep)s/t)+
\int\limits_{\alpha_-(\ep)/\ep} ^{\ep\tau}{\cal N}(\sigma/\tau)\,d\widetilde\varphi(\sigma).
\end{equation}

The first integral in (\ref{qq}) gives us the term $\widetilde S(t,\ep)$. Integrating by parts 
(note that for $s>\|T\|$ the integrand equals 0) we obtain
$$\widetilde S(t,\ep)={\cal N}(\ep)\widetilde\varphi(\alpha_+(\ep)\ep/t)
-\int\limits_{\ep}^{\|T\|}\widetilde\varphi(\alpha_+(\ep)s/t)\,d{\cal N}(s).
$$
By Proposition \ref{SVF}, part~{\bf 1}, 
$\frac {\widetilde\varphi(\alpha_+(\ep)s/t)}{\widetilde\varphi(1/t)}\to1$ as $t\to 0+$ 
uniformly with respect to $s\in[\ep,\|T\|]$ , and we arrive at (\ref{S}).

By the first relation in (\ref{asymp}) we can estimate $\cal N(\sigma/\tau)$ in the second integral by
$\alpha_+(\ep)\varphi(\tau/\sigma)$ that gives the integral term in (\ref{ocenka_oplus}).
\end{proof}

\begin{tm}\label{spectr_asymp}
Let operators $\cal T$ and $\widetilde{\cal T}$ be as in Theorem $\ref{ocenka}$. Then
\begin{equation}\label{asymp_oplus}
{\cal N}_{\otimes}(t) \sim \phi(1/t)\equiv(\varphi\star\widetilde {\varphi})(1/t).
\end{equation}
\end{tm}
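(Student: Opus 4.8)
The plan is to substitute the two-sided bound (\ref{ocenka_oplus}) of Theorem \ref{ocenka} into a squeezing argument: I will show that, after division by $\phi(1/t)=(\varphi\star\widetilde\varphi)(1/t)$, both the upper and the lower bound in (\ref{ocenka_oplus}) tend, as $t\to0+$, to a quantity lying between $\alpha_-(\ep)$ and $\alpha_+(\ep)$; letting $\ep\to0+$ and using $\alpha_\pm(\ep)\to1$ then yields (\ref{asymp_oplus}). By Proposition \ref{SVF}, part~\textbf{2}, together with Theorem \ref{convolution}, part~\textbf{4}, we may and do assume that $\varphi$ and $\widetilde\varphi$ are smooth, in particular bounded on every compact subinterval of $\mathbb R_+$.

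First I dispose of the non-integral terms in (\ref{ocenka_oplus}). Fix $\ep>0$. By (\ref{S}) we have $S(t,\ep)\sim\varphi(1/t)\,\widetilde\varphi(1/\ep)$ and $\widetilde S(t,\ep)=o(\widetilde\varphi(1/t))$ as $t\to0+$; since $\varphi(1/t)=o(\phi(1/t))$ and $\widetilde\varphi(1/t)=o(\phi(1/t))$ by Theorem \ref{convolution}, part~\textbf{2} (and its symmetric version pointed out in the Remark), it follows that $S(t,\ep)+\widetilde S(t,\ep)=o(\phi(1/t))$, $t\to0+$, for each fixed $\ep$. (For $t$ small enough the truncated integral in (\ref{ocenka_oplus}) is genuinely present, since $\ep\tau\to+\infty$.)

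The core of the argument is the claim that the integral term in (\ref{ocenka_oplus}), denote it $I(t,\ep)$, satisfies $I(t,\ep)=(1+o(1))\,\phi(1/t)$ as $t\to0+$ for every fixed $\ep$, where the $o(1)$ may depend on $\ep$. Here I perform the change of variable $\sigma=\alpha_\pm(\ep)\sigma'$; since $\tau=\alpha_\pm(\ep)/t$, this turns the integrand $\varphi(\tau/\sigma)$ into exactly $\varphi(1/(t\sigma'))$ and replaces $d\widetilde\varphi(\sigma)$ by $d\widetilde\varphi_1(\sigma')$, where $\widetilde\varphi_1(u):=\widetilde\varphi(\alpha_\pm(\ep)u)$ is again a nondecreasing unbounded SVF, equivalent to $\widetilde\varphi$. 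One then compares $I(t,\ep)=\int\varphi(1/(t\sigma'))\,d\widetilde\varphi_1(\sigma')$, taken over an interval of the form $[c_1(\ep),\,\ep/t]$ with $c_1(\ep)>1$ for small $\ep$, with $(\varphi\star\widetilde\varphi_1)(1/t)=\int_1^{1/t}\varphi(1/(t\sigma'))\,d\widetilde\varphi_1(\sigma')$: the two differ only by the ``corner'' integrals $\int_1^{c_1(\ep)}$ and $\int_{\ep/t}^{1/t}$, both nonnegative. On the first, $\sigma'$ ranges over the fixed compact $[1,c_1(\ep)]$, so $\varphi(1/(t\sigma'))=(1+o(1))\varphi(1/t)$ uniformly by Proposition \ref{SVF}, part~\textbf{1}, whence this corner is $O_\ep(\varphi(1/t))=o_\ep(\phi(1/t))$. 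On the second, the argument $1/(t\sigma')$ stays in the fixed compact $[1,1/\ep]$ where $\varphi$ is bounded, while $\widetilde\varphi_1(1/t)-\widetilde\varphi_1(\ep/t)=o(\widetilde\varphi_1(1/t))=o(\phi(1/t))$ by the SVF property and Theorem \ref{convolution}, part~\textbf{2}; so this corner is $o_\ep(\phi(1/t))$ as well. Using Theorem \ref{convolution}, part~\textbf{4}, to pass from $\varphi\star\widetilde\varphi_1$ back to $\varphi\star\widetilde\varphi=\phi$, one concludes $I(t,\ep)=(\varphi\star\widetilde\varphi_1)(1/t)+o_\ep(\phi(1/t))\sim\phi(1/t)$.

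Combining the three estimates, the upper bound in (\ref{ocenka_oplus}) becomes ${\cal N}_\otimes(t)\le\alpha_+(\ep)(1+o(1))\,\phi(1/t)$ and the lower bound ${\cal N}_\otimes(t)\ge\alpha_-(\ep)(1+o(1))\,\phi(1/t)$ as $t\to0+$; hence $\alpha_-(\ep)\le\liminf_{t\to0+}{\cal N}_\otimes(t)/\phi(1/t)\le\limsup_{t\to0+}{\cal N}_\otimes(t)/\phi(1/t)\le\alpha_+(\ep)$ for every $\ep>0$, and letting $\ep\to0+$ gives (\ref{asymp_oplus}). I expect the main obstacle to be the estimation of the corner integrals, i.e.\ checking that replacing the truncated domain $[c_1(\ep),\ep/t]$ by the full $[1,1/t]$ changes the integral by only $o_\ep(\phi(1/t))$; this is precisely where Theorem \ref{convolution}, part~\textbf{2}, namely $\varphi(\tau)+\widetilde\varphi(\tau)=o((\varphi\star\widetilde\varphi)(\tau))$, enters decisively. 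Everything else is routine bookkeeping with slowly varying functions.
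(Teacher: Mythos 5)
Your proof is correct and follows essentially the same route as the paper: kill $S$ and $\widetilde S$ via Theorem \ref{convolution}, part \textbf{2}, show the truncated integral differs from the full convolution only by corner integrals of size $O_\ep(\varphi)+O_\ep(\widetilde\varphi)=o(\phi)$, and squeeze between $\alpha_\pm(\ep)$. The only (harmless) cosmetic difference is that you rescale the integration variable and invoke part \textbf{4} to pass from $\varphi\star\widetilde\varphi_1$ to $\varphi\star\widetilde\varphi$, whereas the paper keeps $\tau=\alpha_\pm(\ep)/t$ and uses part \textbf{5} (the convolution is itself an SVF) to replace $\phi(\tau)$ by $\phi(1/t)$ at the end.
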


\begin{proof}
Fix arbitrary $\ep >0$ and consider the estimates (\ref{ocenka_oplus}). By Theorem \ref{convolution}, part~{\bf 2}, 
we have 
$$S(t,\ep) = o((\varphi\star\widetilde{\varphi})(1/t)),\quad 
\widetilde{S}(t,\ep)=o((\varphi\star\widetilde{\varphi})(1/t)), \qquad t\to+0.$$ 
Further, the integral in the right-hand side of (\ref{ocenka_oplus}) differs from the
convolution $(\varphi\star\widetilde \varphi)(\tau)$ by the integrals
$$\int\limits_{\ep \tau}^{\tau}\varphi(\tau/\sigma)\,d\widetilde{\varphi}(\sigma)=
O(\widetilde{\varphi}(\tau))=o((\varphi\star\widetilde{\varphi})(\tau)),\qquad \tau \to +\infty,$$
$$\int\limits_1^{\alpha_{\mp}/\ep}\varphi(\tau/\sigma)\,d\widetilde{\varphi}(\sigma)= 
O(\varphi(\tau))= o((\varphi\star\widetilde{\varphi})(\tau)), \qquad\tau \to +\infty,$$
(we recall that $\tau=\alpha_{\pm}(\ep)/t$).

Due to Theorem \ref{convolution}, part~{\bf 5},
$(\varphi\star\widetilde{\varphi})(\tau) \sim(\varphi\star\widetilde{\varphi})(1/t)$, and hence
$$\limsup\limits_{t\to 0+}\frac{{\cal N}_{\otimes}(t)}{\phi(1/t)}\le \alpha_+(\ep),\qquad 
\liminf\limits_{t\to 0+}\frac{{\cal N}_{\otimes}(t)}{\phi(1/t)}\ge \alpha_-(\ep),
$$ 
where $\phi$ is defined in (\ref{asymp_oplus}). Taking into account
that $\alpha_{\pm}(\ep)\to 1$ as $\ep \to 0+$, we arrive at
(\ref{asymp_oplus}).
\end{proof}

\section{Small ball asymptotics. Examples}

To connect the given asymptotic behavior of eigenvalues $\lambda_n$ with the
logarithmic decay rate for small deviations, we use the following statement, that is
slightly reformulated \cite[Theorem 2]{Na1}.
\begin{prop} 
 Let $(\lambda_n)$, $n\in\mathbb N$, be a
positive sequence with counting function ${\cal N}(t)$.
Suppose that
$${\cal N}(t)\sim \varphi(1/t), \qquad t\to 0+,
$$
where $\varphi$ is a function slowly varying at infinity.
Then, as $r\to 0+$,
\begin{equation}\label{vero}
\ln{\bf P}\left\{\sum_{n=1}^\infty\lambda_n\xi_n^2\le r
\right\} \sim -\frac 12 \int\limits_1^u \varphi(z)\,\frac {dz}z, 
\end{equation}
where $u=u(r)$ satisfies the relation
\begin{equation}\label{ur}
\frac {\varphi(u)}{2u}\sim r, \qquad r\to 0+.
\end{equation}
\end{prop}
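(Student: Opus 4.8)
The plan is to reduce everything to the behaviour of the Laplace transform of $Y:=\sum_n\lambda_n\xi_n^2$ and then to invoke an exponential Tauberian theorem. Since the $\xi_n$ are independent standard normal,
\[
L(t):={\bf E}\,e^{-tY}=\prod_n(1+2t\lambda_n)^{-1/2},\qquad
-\ln L(t)=\tfrac12\sum_n\ln(1+2t\lambda_n).
\]
Writing the sum as a Stieltjes integral against $-{\cal N}$ and integrating by parts (the boundary terms vanish because $s\,{\cal N}(s)\to0$ as $s\to0$ and ${\cal N}(s)=0$ for large $s$), and then substituting $s=1/z$, one gets
\[
-\ln L(t)=\int_0^\infty\frac{t\,{\cal N}(s)}{1+2ts}\,ds=\int\frac{t}{z+2t}\,{\cal N}(1/z)\,\frac{dz}{z}.
\]

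First I would show that $-\ln L(t)\sim\Lambda(t):=\tfrac12\int_1^t\varphi(z)\,\frac{dz}{z}$ as $t\to+\infty$. Indeed ${\cal N}(1/z)\sim\varphi(z)$, the kernel $\tfrac{t}{z+2t}$ equals $\tfrac12+O(M^{-1})$ uniformly for $z\le t/M$ and never exceeds $\tfrac12$, while the ranges $t/M<z<2t$ and $z>2t$ contribute only $O(\varphi(t))$ (by Proposition \ref{SVF} and the bound $\varphi(z_2)/\varphi(z_1)\le C(z_2/z_1)^{\ep}$ for $z_2\ge z_1$); since $\varphi$ is slowly varying, $\varphi(t)=o\bigl(\int_1^t\varphi(z)\,dz/z\bigr)$, so letting first $t\to+\infty$ and then $M\to+\infty$ gives the claim. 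Note that $\Lambda$ is a nondecreasing SVF with $\Lambda'(t)=\varphi(t)/(2t)$ and $\varphi(t)=o(\Lambda(t))$; thus $u=u(r)$ from (\ref{ur}) is precisely the root of $\Lambda'(u)\sim r$, and $u(r)\to+\infty$. The upper bound in (\ref{vero}) then follows from the exponential Chebyshev inequality at $t=u$:
\[
{\bf P}(Y\le r)\le e^{ur}L(u)=\exp\bigl(ur-\Lambda(u)(1+o(1))\bigr),
\]
and $ur\sim\varphi(u)/2=o(\Lambda(u))$ by (\ref{ur}), whence $\ln{\bf P}(Y\le r)\le-\Lambda(u)(1+o(1))=-\tfrac12\int_1^u\varphi(z)\,dz/z\,(1+o(1))$.

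The matching lower bound is the crux. The clean route is to observe that the relation $-\ln L(t)\sim\Lambda(t)$ with $\Lambda$ a nondecreasing SVF is, via an exponential Tauberian theorem (of de Bruijn--Kasahara type), equivalent to $-\ln{\bf P}(Y\le r)\sim\sup_{t>0}\bigl(\Lambda(t)-tr\bigr)$, the supremum being attained at $t=u$ with value $\Lambda(u)-ur=\Lambda(u)(1+o(1))$; this is exactly the content of \cite[Theorem~2]{Na1}, so it suffices to check that its hypotheses coincide with ours and to rewrite the answer through the $u$ of (\ref{ur}). A self-contained alternative is a truncation argument: split $Y=\sum_{n\le N}\lambda_n\xi_n^2+\sum_{n>N}\lambda_n\xi_n^2$, use independence to write ${\bf P}(Y\le r)\ge{\bf P}\bigl(\sum_{n\le N}\lambda_n\xi_n^2\le\tfrac r2\bigr){\bf P}\bigl(\sum_{n>N}\lambda_n\xi_n^2\le\tfrac r2\bigr)$, bound the second factor below by Markov after choosing $N=N(r)$ so that $\sum_{n>N}\lambda_n\le r/4$, bound the first factor below through the behaviour at the origin of the density of a finite positive quadratic form, and then verify, using only ${\cal N}(t)\sim\varphi(1/t)$ and the slow variation, that the resulting quantity is again $-\Lambda(u)(1+o(1))$. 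The main obstacle in either route is precisely this last bookkeeping: one must see that the slow variation of $\varphi$ forces every correction term (the quantity $ur$, the $\Gamma$-factor of the finite form, the tail of the series) to be of strictly lower order than $\Lambda(u)$, which is what makes the logarithmic constant sharp.
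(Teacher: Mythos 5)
The paper offers no proof of this Proposition at all: it is introduced as a slight reformulation of \cite[Theorem 2]{Na1} and used as a black box, so the only thing to compare your attempt with is that citation. The first half of your argument --- the identity $-\ln L(t)=\int_0^\infty t\,{\cal N}(s)(1+2ts)^{-1}\,ds$, the asymptotics $-\ln L(t)\sim\Lambda(t)=\frac 12\int_1^t \varphi(z)\,\frac{dz}{z}$ obtained by splitting at $z=t/M$ and $z=2t$, the identification of the $u$ of (\ref{ur}) as the maximizer of $\Lambda(t)-tr$, and the exponential Chebyshev upper bound together with $ur\sim\varphi(u)/2=o(\Lambda(u))$ --- is correct and complete; in particular the observation that $\varphi(t)=o\big(\int_1^t\varphi(z)\,dz/z\big)$ is exactly the mechanism that makes all correction terms negligible on the logarithmic scale.

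The lower bound, however, is not actually established. Your ``clean route'' invokes an exponential Tauberian theorem whose content you yourself identify with \cite[Theorem 2]{Na1}, i.e.\ with the statement being proved; that is a citation, not a proof (and it is the same citation the paper makes). Note also that the classical de Bruijn--Kasahara theorem treats $-\ln L(t)$ regularly varying of \emph{positive} index, whereas here $\Lambda$ is slowly varying --- this degenerate case is precisely what requires the separate argument of \cite{Na1}. Your truncation alternative is only a plan: the choice of $N=N(r)$, the lower estimate for ${\bf P}\{\sum_{n\le N}\lambda_n\xi_n^2\le r/2\}$ (which produces $\frac N2\ln r-\frac 12\sum_{n\le N}\ln\lambda_n$ plus an entropy-type term $O(N\ln N)$), and the verification that with the right $N$ this collapses to $-\Lambda(u)(1+o(1))$ are exactly the steps you defer as ``bookkeeping'', and they are where the hypothesis ${\cal N}(t)\sim\varphi(1/t)$ is actually consumed. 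So your primary route coincides with the paper's (defer to \cite{Na1}), with a correct bonus verification of the upper bound; as a self-contained proof, the attempt is incomplete at its crux.
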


{\bf Example 3}. Let 
$\varphi(\tau)=\ln^\alpha(\tau)\cdot\Phi(\ln(\tau))$, where $\alpha\ge0$ while $\Phi$ is a SVF\footnote{If $\alpha=0$, 
then we require in addition that $\Phi$ is nondecreasing and unbounded.}. Then, as $\ep\to 0+$,
\begin{equation}\label{smalldev}
 \ln{\bf P}\left\{\sum_{n=1}^\infty\lambda_n\xi_n^2\le \ep^2\right\} \sim 
-\frac{2^\alpha}{\alpha+1}\,\varphi(1/\ep)\ln(1/\ep).
\end{equation}

\begin{proof}
Changing the variable $z=\frac u\sigma$ and using (\ref{conv_pq}), we observe that
$$\int\limits_1^u \varphi(z)\,\frac {dz}z=(\varphi\star\ln)(u)\sim
\frac 1{\alpha+1}\,\varphi(u)\ln(u),\qquad u\to+\infty.
$$
Next, direct calculation shows that $u=\frac {\varphi(1/r)}{2r}$ satisfies (\ref{ur}). Therefore, formula (\ref{vero}) gives, as $r\to 0+$,
$$\ln{\bf P}\left\{\sum_{n=1}^\infty\lambda_n\xi_n^2\le r\right\} \sim 
-\frac 1{2(\alpha+1)} \,\varphi(1/r)\ln(1/r).
$$
Replacing $r$ by $\ep^2$, we arrive at (\ref{smalldev}).
\end{proof}

{\bf Example 4}. Let 
$\varphi(\tau)=\exp(a\ln^{\frac 1p}(\tau))\,\ln^\alpha(\tau)\cdot\Phi(\ln(\tau))$, 
where $\alpha\ge0$, $p>1$ while $\Phi$ is a SVF. Then, as $\ep\to 0+$,
\begin{multline}\label{smalldev1}
 \ln{\bf P}\left\{\sum_{n=1}^\infty\lambda_n\xi_n^2\le \ep^2\right\} \sim \\
\sim-\frac {2^{\alpha-\frac 1p}p}a\,\exp\Big[2\ln(1/\ep)\cdot
\sum\limits_{k=1}^{[p']}c_k\big(\frac a{2^{\frac 1{p'}}}
\ln^{-\frac 1{p'}}(1/\ep)\big)^k\Big]\,\ln^{\alpha+\frac 1{p'}}(1/\ep)
\cdot\Phi(\ln (1/\ep)),
\end{multline}
where
\begin{equation}\label{coeff}
c_1=1;\quad\qquad c_k=\frac 1{k!}\,\prod\limits_{m=0}^{k-2}\Big(\frac kp-m\Big)\quad\mbox{for} \quad k\ge2.
\end{equation}
In particular, if $p>2$ then
$$\ln{\bf P}\left\{\sum_{n=1}^\infty\lambda_n\xi_n^2\le \ep^2\right\} \sim 
-\frac {2^{-\frac 1p}p}a\,\varphi(\ep^2)\ln^{\frac 1{p'}}(1/\ep).
$$

\begin{proof}
Changing the variable we obtain
$$\int\limits_1^u\varphi(z)\,\frac {dz}z=T^{\alpha +1}\cdot\Phi(T)
\int\limits_0^1 \exp(a T^{\frac 1p}\vartheta^{\frac 1p})\cdot\vartheta^\alpha\cdot\frac {\Phi(T\vartheta))}{\Phi(T)}\,d\vartheta,
$$
where $T=\ln(u)$.

The Laplace method and the Lebesgue dominated convergence theorem give us the asymptotics
$$\int\limits_1^u\varphi(z)\,\frac {dz}z\sim \frac pa
\exp(a\ln^{\frac 1p} (u))\,\ln^{\alpha+\frac 1{p'}}(u)\cdot \Phi(\ln (u)),\qquad u\to+\infty$$
(we recall that $p'$ is the H\"older conjugate exponent for $p$).

Next, direct though cumbersome calculation shows that 
$$u=\frac 1{2r}\,\exp\Big[\ln(1/r)\cdot
\sum\limits_{k=1}^{[p']}c_k\big(a\ln^{-\frac 1{p'}}(1/r)\big)^k\Big]\,\ln^{\alpha}(1/r)\cdot
\Phi(\ln (1/r))$$
with $c_k$ given by (\ref{coeff}) satisfies (\ref{ur}). Therefore, formula (\ref{vero}) gives, 
as $r\to 0+$,
\begin{multline*}
\ln{\bf P}\left\{\sum_{n=1}^\infty\lambda_n\xi_n^2\le r\right\}
\sim-\frac pa\,ur\ln^{\frac 1{p'}}(u) \sim \\
\sim-\frac p{2a}\,\exp\Big[\ln(1/r)\cdot
\sum\limits_{k=1}^{[p']}c_k\big(a\ln^{-\frac 1{p'}}(1/r)\big)^k\Big]\,\ln^{\alpha+\frac 1{p'}}(1/r)
\cdot\Phi(\ln (1/r)).
\end{multline*}
Replacing $r$ by $\ep^2$, we arrive at (\ref{smalldev1}).
\end{proof}

Turning to specific fields, we start with a stationary sheet 
$${\cal R}^{\cal G}_d(x)=\underset{j=1}{\overset{d}\otimes} R^{{\cal G}_j}(x_j),\qquad
x=(x_1,\dots,x_d) \in [0,1]^d,
$$
where $R^{{\cal G}_j}$ are stationary Gaussian processes with zero mean-values and 
the spectral densities
$$h_{R^{{\cal G}_j}}(\xi)=\exp(-{\cal G}_j(\xi)), \qquad \xi\in\mathbb R$$
(here ${\cal G}_j$ is even and ${\cal G}_j(\xi)\to+\infty$ as $\xi\to+\infty$).

Assume for simplicity that ${\cal G}$ is smooth and $\xi{\cal G}'(\xi)\to+\infty$ as $\xi\to+\infty$. 
Then it is easy to check that the corresponding covariances $G_{R^{\cal G}}$ are smooth functions. 
For instance, it is well known that
$${\cal G}(\xi)=|\xi|\quad\Longrightarrow\quad
G_{R^{\cal G}}(s,t)=\frac 1{\pi(1+(s-t)^2)};
$$
$${\cal G}(\xi)=\xi^2\quad\Longrightarrow\quad
G_{{\cal G}}(s,t)=\frac 1{2\sqrt{\pi}}\exp\left(-\frac {(s-t)^2}{4}\right).
$$
Small deviations of such processes in various $L_p$-norms in the case ${\cal G}(\xi)=|\xi|^\alpha$ were 
considered in \cite{Na1}, \cite{AILZ}, \cite{Ku}.

\begin{prop}\label{exp_sheet}
{\bf 1}. Let ${\cal G}_j(\xi)\sim C_j\ln^p(\xi)$ as $\xi\to+\infty$, with $p>1$. Then, as 
$\ep\to 0+$,
\begin{multline}\label{log}
\ln {\bf P}\{\|{\cal R}^{\cal G}_d\|\le\ep\}\sim -\frac p2
\Big(\pi^{d+1}{\mathfrak B}{\mathfrak C}\Big(\frac {p-1}2\Big)^{d-1}
\Big)^{-\frac 12}\times\\
\times\exp\Big[2\ln(1/\ep)\cdot
\sum\limits_{k=1}^{[p']}c_k\Big(\frac {2\ln(1/\ep)}{\mathfrak B}\Big)^{-\frac k{p'}}\Big]
\Big(\frac {2\ln(1/\ep)}{\mathfrak B}\Big)^{1+\frac {d-3}{2p}},
\end{multline}
where
$${\mathfrak B}=\sum\limits_{j=1}^dC_j^{-\frac 1{p-1}};\qquad
{\mathfrak C}=\prod\limits_{j=1}^dC_j^{\frac 1{p-1}},
$$
while $c_k$ are given by (\ref{coeff}). In particular, if $p>2$ then
$$\ln {\bf P}\{\|{\cal R}^{\cal G}_d\|\le\ep\}\sim -\frac p2
\Big(\pi^{d+1}{\mathfrak B}{\mathfrak C}\Big(\frac {p-1}2\Big)^{d-1}
\Big)^{-\frac 12}\exp\Big[{\mathfrak B}^{\frac 1{p'}}\Big(2\ln(1/\ep)\Big)^{\frac 1p}\Big]
\Big(\frac {2\ln(1/\ep)}{\mathfrak B}\Big)^{1+\frac {d-3}{2p}}.
$$

{\bf 2}. Let ${\cal G}_j(\xi)\sim\xi^q\Phi_j(\xi)$ as $\xi\to+\infty$, with $0<q\le1$ and 
$\Phi_j$ being an SVF (if $q=1$ we require in addition that $\Phi_j(\xi)\to0$ as $\xi\to+\infty$).
Then, as $\ep\to 0+$,
\begin{equation}\label{q<1}
\ln {\bf P}\{\|{\cal R}^{\cal G}_d\|\le\ep\}\sim 
-\frac {2^{\frac dq}\Gamma^d(\frac {q+1}q)}{\pi^d\Gamma(\frac dq+2)}\cdot
\ln^{\frac dq+1}(1/\ep)\cdot\prod\limits_{j=1}^d\phi_j(\ln(1/\ep)),
\end{equation}
where $\phi_j$, $j=1,\dots,d$, are SVFs depending only on $\Phi_j$ and on $q$; for $q=1$ we have $\phi_j(t)\to+\infty$ as $t\to+\infty$. In particular,
$$\Phi_j(\xi)=C_j\ln^p(\xi)\quad\Longrightarrow\quad
\ln {\bf P}\{\|{\cal R}^{\cal G}_d\|\le\ep\}\sim 
-\frac {\Gamma^d(\frac {q+1}q)}{\pi^d\Gamma(\frac dq+2)}\cdot
\Big(\frac {2q^p}{{\mathfrak C}}\Big)^{\frac dq}\cdot
\frac {\ln^{\frac dq+1}(1/\ep)}{\ln^{\frac {p d}q}(\ln(1/\ep))},
$$
where ${\mathfrak C}=\Big(\prod\limits_{j=1}^d C_j\Big)^{\frac 1d}$ (we recall that $p<0$ for 
$q=1$).\medskip

{\bf 3}. Let ${\cal G}_j(\xi)\sim C_j\xi$ as $\xi\to+\infty$. Then, as $\ep\to 0+$,
\begin{equation}\label{q=1}
\ln {\bf P}\{\|{\cal R}^{\cal G}_d\|\le\ep\}\sim 
-\frac {2^d}{\pi^d(d+1)!\,{\mathfrak C}}\cdot\ln^{d+1}(1/\ep),
\end{equation}
where
$${\mathfrak C}=\prod\limits_{j=1}^d
\frac {{\bf K}(\mbox{\rm sech}(\pi/2C_j))}{{\bf K}(\tanh(\pi/2C_j))}$$
while $\bf K$ is the complete elliptic integral of the first kind, see, e.g., \cite[8.11]{GR}.\medskip

{\bf 4}. Let ${\cal G}_j(\xi)\sim\xi\Phi_j(\xi)$ as $\xi\to+\infty$, where $\Phi_j$ is an SVF.
Suppose in addition that ${\cal G}_j(\xi)$ is convex for large $\xi$ and $\Phi_j(\xi)\to+\infty$ 
as $\xi\to+\infty$. Then, as $\ep\to 0+$,
\begin{equation}\label{q=1>}
\ln {\bf P}\{\|{\cal R}^{\cal G}_d\|\le\ep\}\sim 
-\frac 1{(d+1)!}\cdot\frac {\ln^{d+1}(1/\ep)}{\prod\limits_{j=1}^d\ln(\phi_j(\ln(1/\ep)))},
\end{equation}
where $\phi_j$, $j=1,\dots,d$ are SVFs depending only on $\Phi_j$, $\phi_j(t)\le\Phi_j(t)$ and
$\phi_j(t)\to+\infty$ as $t\to+\infty$. In particular,
$$\Phi_j(\xi)=C_j\ln^p(\xi)\quad\Longrightarrow\quad
\ln {\bf P}\{\|{\cal R}^{\cal G}_d\|\le\ep\}\sim 
-\frac 1{p^d(d+1)!}\cdot
\frac {\ln^{d+1}(1/\ep)}{\ln^d(\ln(\ln(1/\ep)))}
$$
(we recall that $p>0$).\medskip

{\bf 5}. Let $\ln({\cal G}_j(\xi))\sim q\ln(\xi)$ as $\xi\to+\infty$, with $q>1$. Then, 
as $\ep\to 0+$,
\begin{equation}\label{q>1}
\ln {\bf P}\{\|{\cal R}^{\cal G}_d\|\le\ep\}\sim 
-\frac {q^d}{(q-1)^d(d+1)!}\cdot\frac {\ln^{d+1}(1/\ep)}{\ln^d(\ln(1/\ep))}.
\end{equation}

{\bf 6}. Let $\ln({\cal G}_j(\xi))/\ln(\xi)\to+\infty$ as $\xi\to+\infty$. Then, 
as $\ep\to 0+$,
\begin{equation}\label{q=infty}
\ln {\bf P}\{\|{\cal R}^{\cal G}_d\|\le\ep\}\sim 
-\frac 1{(d+1)!}\cdot\frac {\ln^{d+1}(1/\ep)}{\ln^d(\ln(1/\ep))}.
\end{equation}

\end{prop}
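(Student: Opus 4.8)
Since ${\cal R}^{\cal G}_d=\otimes_{j=1}^{d} R^{{\cal G}_j}$ by construction, its covariance operator is $\otimes_{j=1}^{d}{\cal T}_j$, where ${\cal T}_j$ is the covariance operator of the marginal process $R^{{\cal G}_j}$ on $L_2[0,1]$, and $\|{\cal R}^{\cal G}_d\|^2\overset{d}{=}\sum_n\lambda_n\xi_n^2$ with $\lambda_n$ the eigenvalues of $\otimes_j{\cal T}_j$. The plan is: (i) determine the marginal counting functions ${\cal N}(t,{\cal T}_j)\sim\varphi_j(1/t)$; (ii) apply Theorem~\ref{spectr_asymp} repeatedly --- legitimate because by Theorem~\ref{convolution}, part~{\bf 5}, an asymptotic convolution of unbounded SVFs is again an unbounded nondecreasing SVF --- to obtain ${\cal N}(t,\otimes_j{\cal T}_j)\sim(\varphi_1\star\dots\star\varphi_d)(1/t)$; (iii) evaluate this $d$-fold asymptotic convolution in closed form via Examples~1--2; (iv) pass from the counting function of the product to the logarithmic small-ball rate by applying Example~3 or Example~4 (i.e. the Proposition of this section, a reformulation of \cite[Theorem~2]{Na1}) with $\varphi:=\varphi_1\star\dots\star\varphi_d$; these give the answer directly in terms of $\ep$.

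Step (i) is where \cite{Na1} enters: for a stationary process on $[0,1]$ with spectral density $\exp(-{\cal G}_j(\xi))$ and $\xi{\cal G}_j'(\xi)\to+\infty$, the counting function of its Karhunen--Lo\`eve eigenvalues is slowly varying, with leading term dictated by the growth of ${\cal G}_j$. In the six regimes this yields, as $\tau\to+\infty$: in {\bf 1}, $\varphi_j(\tau)\sim\frac1\pi\exp\bigl(C_j^{-1/p}\ln^{1/p}\tau\bigr)$ --- the shape of Example~2 with $a_j=C_j^{-1/(p-1)}$, log-exponent $0$, SVF factor $\frac1\pi$; in {\bf 2}, $\varphi_j(\tau)\sim\frac1\pi\ln^{1/q}(\tau)\,\phi_j(\ln\tau)$, where by Proposition~\ref{SVF}, part~{\bf 3}, $\phi_j$ is the SVF with ${\cal G}_j^{-1}(y)\sim y^{1/q}\phi_j(y)$ --- the shape of Example~1 with log-exponent $1/q$; in {\bf 3}, $\varphi_j(\tau)\sim\kappa_j\ln\tau$ with $\kappa_j=\frac1\pi\,{\bf K}(\tanh(\pi/2C_j))/{\bf K}(\mathrm{sech}(\pi/2C_j))$, the elliptic-integral constant coming from the \emph{sharp} eigenvalue asymptotics of a convolution operator with a Poisson-type kernel (obtained in \cite{Na1} by conformally mapping the strip of analyticity of the covariance onto a rectangle, whose modulus is the ratio of complete elliptic integrals, cf.\ \cite[8.11]{GR}); and in {\bf 4}--{\bf 6}, where ${\cal G}_j$ is (barely to genuinely) super-linear and the phase-volume heuristic ${\cal N}\approx\frac1\pi{\cal G}_j^{-1}(\ln(1/t))$ breaks down, \cite{Na1} gives instead $\varphi_j(\tau)\sim c_j\,\ln(\tau)/L_j(\ln\tau)$ --- again the shape of Example~1 with log-exponent $1$ --- where the coefficient $c_j$ and the slowly varying iterated-logarithmic correction $L_j$ (namely $\ln\phi_j$ with the $\phi_j$ of the statement in {\bf 4}, a constant multiple of $\ln(\cdot)$ in {\bf 5}, and $\ln(\cdot)$ in {\bf 6}) are exactly those making the $\phi_j$ recorded in the statement appear; the conditions imposed on $\Phi_j$ in {\bf 2} and {\bf 4} are precisely what keeps ${\cal G}_j$ in the admissible range for these formulas.

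For step (iii), in cases {\bf 2}--{\bf 6} every $\varphi_j$ has the Example~1 form $\ln^{\alpha_j}(\tau)\cdot(\mathrm{SVF})$, and iterating (\ref{conv_pq}) telescopes the Gamma-factors to give $(\varphi_1\star\dots\star\varphi_d)(\tau)\sim\frac{\prod_j\Gamma(\alpha_j+1)}{\Gamma\bigl(\sum_j\alpha_j+1\bigr)}\,\prod_j\varphi_j(\tau)$, again a function of the form $\ln^{A}(\tau)\cdot(\mathrm{SVF})$ with $A=\sum_j\alpha_j$. In case {\bf 1} one iterates (\ref{conv_exp}): the exponential arguments add to $\exp\bigl({\mathfrak B}^{1/p'}\ln^{1/p}\tau\bigr)$, the log-exponents accumulate to $\frac{d-1}{2p}$, and the intermediate $(a+b)$-powers cancel between successive steps, leaving the scalar factor $(2\pi/(p-1))^{(d-1)/2}\,\prod_j a_j^{1/2}\,{\mathfrak B}^{-\frac{d-1}{2p}-\frac12}$. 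Step (iv) then applies Example~3 (cases {\bf 2}--{\bf 6}, with exponent $A$) or Example~4 (case {\bf 1}, with $a={\mathfrak B}^{1/p'}$ and exponent $\frac{d-1}{2p}$); collecting the $\Gamma$-factors, the $d$ factors $\pi^{-1}$ coming from the marginals, and the resulting powers of $2$ and of ${\mathfrak B}$ reproduces (\ref{log}), (\ref{q<1}), (\ref{q=1}), (\ref{q=1>}), (\ref{q>1}), (\ref{q=infty}). The ``in particular'' statements ($p>2$ in {\bf 1}, and $\Phi_j=C_j\ln^p\xi$ in {\bf 2} and {\bf 4}) follow by specialisation: for $p>2$ one has $[p']=1$, so the $c_k$-sum in (\ref{log}) collapses to its first term $c_1(\cdot)=(\cdot)$.

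The only genuinely non-routine step is (i): establishing the marginal eigenvalue asymptotics for each growth regime of ${\cal G}_j$ and, above all, pinning down the constants --- the elliptic-integral $\kappa_j$ in case {\bf 3}, where the naive semiclassical count is off by an exact factor dictated by the conformal geometry of the strip of analyticity, and the corrections $L_j$ and coefficients $c_j$ in cases {\bf 4}--{\bf 6}, where the phase-volume heuristic fails outright. These are imported wholesale from \cite{Na1}. Everything downstream --- the telescoping of (\ref{conv_pq}), the cancellations in the iteration of (\ref{conv_exp}), and the collection of multiplicative constants through the final inversion (\ref{ur}) --- is mechanical, if (as already in Example~4) somewhat cumbersome.
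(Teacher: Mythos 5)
Your proof follows essentially the same route as the paper: marginal counting functions, iterated application of Theorem~\ref{spectr_asymp} with the convolutions evaluated by Examples~1--2, and then Examples~3--4 for the small-ball rate; your bookkeeping of the constants (the telescoping of (\ref{conv_pq}), the cancellation of the intermediate $(a+b)$-powers when iterating (\ref{conv_exp}), and the final collection into ${\mathfrak B}$ and ${\mathfrak C}$) agrees with the paper's. One correction: the marginal eigenvalue asymptotics in all six regimes --- including the elliptic-integral constant in part~{\bf 3} and the implicit equation ${\cal G}_j(\xi)=2n\ln(n/\xi)$ governing parts {\bf 4}--{\bf 6} --- are imported from Widom \cite{W} (formulas (\ref{Widom1}) and (\ref{Widom2})), not from \cite{Na1}, which treats only ${\cal G}(\xi)=|\xi|^\alpha$ and enters the paper's argument solely through the Proposition linking counting functions to small-deviation probabilities.
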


\begin{rem}
General formulas (\ref{log})--(\ref{q=infty}) are new even for $d=1$. A particular case of purely power
dependence of ${\cal G}_j$ with $d=1$ was considered in \cite{Na1}. The recent preprint \cite{Ku} deals 
with ${\cal G}_j(\xi)=C_j\xi^2$ for arbitrary $d$ but does not contain exact constant in (\ref{q>1}).

Note that in the superexponential case (parts {\bf 4}--{\bf 6}) the logarithmic small ball asymptotics 
does not change if one multiplies ${\cal G}_j$ by a constant.
\end{rem}

\begin{proof}
{\bf 1}. It is shown in the remarkable paper \cite{W} that, if ${\cal G}_j(\xi)/\xi\to0$ as
$\xi\to+\infty$, then
\begin{equation}\label{Widom1}
\lambda_n^{(j)}\sim\exp(-{\cal G}_j(\pi n\cdot(1+o(1))))\qquad \mbox{as}\quad n\to\infty.
\end{equation}
In our case (\ref{Widom1}) implies
$${\cal N}_j(t)\sim \pi^{-1} \cdot\exp(C_j^{-\frac 1p}\ln^{\frac 1p}(1/t)),\qquad t\to 0+.
$$
Using Theorem \ref{spectr_asymp} and Example 2 successively $d-1$ times, we obtain that 
$${\cal N}(t)\sim 
\Big(\pi^{d+1}{\mathfrak C}{\mathfrak B}^{1+\frac {d-1}p}\Big(\frac {p-1}2\Big)^{d-1}
\Big)^{-\frac 12}
\exp({\mathfrak B}^{\frac 1{p'}}\ln^{\frac 1p}(1/t))\ln^{\frac {d-1}{2p}}(1/t).
$$
By (\ref{smalldev1}) we arrive at (\ref{log}).\medskip

{\bf 2}. It follows from (\ref{Widom1}) that in this case
$${\cal N}_j(t)\sim \pi^{-1} \cdot\ln^{\frac 1q}(1/t)\phi_j(\ln(1/t)),\qquad t\to 0+,
$$
and in a mentioned particular case
$${\cal N}_j(t)\sim \pi^{-1}\ \bigg(\frac {q^p}{C_j}
\cdot\frac{\ln(1/t)}{\ln^p(\ln(1/t))}\bigg)^{\frac 1q},\qquad t\to 0+.
$$
Using Theorem \ref{spectr_asymp} and Example 1 successively $d-1$ times, we obtain that 
$${\cal N}(t)\sim \frac {\Gamma^d(\frac {q+1}q)}
{\pi^d\Gamma(\frac dq+1)\,}\cdot
\ln^{\frac dq}(1/t)\cdot\prod\limits_{j=1}^d\phi_j(\ln(1/t)).
$$
By (\ref{smalldev}) we arrive at (\ref{q<1}).\medskip

{\bf 3}. It follows from \cite{W}, that in this case
$${\cal N}_j(t)\sim\frac {{\bf K}(\tanh(\pi/2C_j))}{\pi\, {\bf K}(\mbox{sech}(\pi/2C_j))}\cdot\ln(1/t),
\qquad t\to 0+.
$$
Similarly to the case {\bf 2}, we arrive at (\ref{q=1}).\medskip

{\bf 4}. It is proved in \cite{W}, that, if ${\cal G}_j(\xi)$ is convex for large $\xi$ and
${\cal G}_j(\xi)/\xi\to+\infty$ as $\xi\to+\infty$, then
\begin{equation}\label{Widom2}
\ln(\lambda_n^{(j)})\sim-{\cal G}_j(\xi(n))\qquad \mbox{as}\quad n\to\infty,
\end{equation}
where $\xi(n)<n$ is a unique solution of the equation
$${\cal G}_j(\xi)=2n\ln\Big(\frac n{\xi}\Big).
$$

In this case we obtain from (\ref{Widom2})
$${\cal N}_j(t)\sim\frac {\ln(1/t)}{2\ln(\phi_j(\ln(1/t)))},\qquad t\to 0+.
$$
and in a mentioned particular case
$${\cal N}_j(t)\sim \frac 1{2p}\cdot\frac {\ln(1/t)}{\ln(\ln(\ln(1/t)))},\qquad t\to 0+.
$$
Similarly to the case {\bf 2}, we arrive at (\ref{q=1>}).

{\bf 5}. It follows from (\ref{Widom2}), that in this case
$${\cal N}_j(t)\sim\frac 1{2-\frac 2q} \cdot\frac {\ln(1/t)}{\ln(\ln(1/t))},\qquad t\to 0+.
$$
Similarly to the case {\bf 2}, we arrive at (\ref{q>1}).

{\bf 6}. It follows from (\ref{Widom2}), that in this case
$${\cal N}_j(t)\sim\frac 12 \cdot\frac {\ln(1/t)}{\ln(\ln(1/t))},\qquad t\to 0+.
$$
Similarly to the case {\bf 2}, we arrive at (\ref{q=infty}).
\end{proof}

Now we consider a smooth homogeneous sheet 
$${\cal Z}^{({\mathfrak a},{\mathfrak b})}_d(x)=
\underset{j=1}{\overset{d}\otimes} Z^{({\mathfrak a}_j,{\mathfrak b}_j)}(x_j),\qquad
x\in [0,1]^d,
$$
where $Z^{({\mathfrak a}_j,{\mathfrak b}_j)}$ are Gaussian processes with zero mean-values and 
the covariances 
$$G_{Z^{({\mathfrak a}_j,{\mathfrak b}_j)}}(s,t)=
\frac{s^{{\mathfrak b}_j}t^{{\mathfrak b}_j}}{(s+t)^{{\mathfrak a}_j}}.
$$
Some properties of these processes were studied in \cite{LiS2} (for 
${\mathfrak b}=\frac 12 ({\mathfrak a}+1)$) and \cite{AGKLS}.

\begin{prop}\label{hom_sheet}
Let ${\mathfrak a}_j>0$, and 
${\mathfrak c}_j\equiv 2{\mathfrak b}_j-{\mathfrak a}_j+1>0$, $j=1,\dots,d$.
Then, as $\ep\to 0+$,
\begin{equation}\label{laptev}
\ln {\bf P}\{\|{\cal Z}^{({\mathfrak a},{\mathfrak b})}_d\|\le\ep\}\sim 
-\frac {2^{2d}}{\pi^{2d}(2d+1)!\,{\mathfrak C}}\cdot\ln^{2d+1}(1/\ep),
\end{equation}
where ${\mathfrak C}=\prod\limits_{j=1}^d {\mathfrak c}_j$.
\end{prop}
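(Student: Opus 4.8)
The plan is to reproduce the scheme of Proposition~\ref{exp_sheet}: first establish the spectral asymptotics of each marginal operator, then pass to the tensor product via Theorem~\ref{spectr_asymp} and Example~1, and finally apply Example~3.

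\emph{Step 1: reduction of the marginal operator.} Let $T_j$ be the integral operator on $L_2([0,1])$ with kernel $G_{Z^{(\mathfrak a_j,\mathfrak b_j)}}$. First I would perform the change of variable $s=e^{-u}$, $t=e^{-v}$ together with the unitary substitution $g(u)=f(e^{-u})e^{-u/2}$, which maps $L_2([0,1])$ onto $L_2(\mathbb R_+)$ and transforms $T_j$ into the unitarily equivalent operator with kernel
\[
K_j(u,v)=\frac{e^{-\mathfrak c_j(u+v)/2}}{\big(2\cosh\tfrac{u-v}{2}\big)^{\mathfrak a_j}}=e^{-\mathfrak c_j u/2}\,h_j(u-v)\,e^{-\mathfrak c_j v/2},\qquad h_j(w)=\big(2\cosh\tfrac w2\big)^{-\mathfrak a_j},
\]
i.e.\ a one-sided weighted convolution operator. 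The hypotheses $\mathfrak a_j>0$ and $\mathfrak c_j>0$ are precisely what makes $K_j$ a legitimate nonnegative trace-class kernel, the trace being $2^{-\mathfrak a_j}\int_0^\infty e^{-\mathfrak c_j u}\,du$.

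\emph{Step 2: marginal spectral asymptotics (the main obstacle).} Using the spectral analysis of such weighted stationary operators from \cite{AGKLS} (see also \cite{LiS2} for the case $\mathfrak c_j=1$), I would obtain
\[
{\cal N}_j(t)\sim\frac{1}{2\pi^2\mathfrak c_j}\,\ln^2(1/t),\qquad t\to0+,
\]
equivalently $\lambda_n^{(j)}\sim\exp(-\pi\sqrt{2\mathfrak c_j\,n})$. Heuristically this is a Weyl-type count: the Fourier transform of $h_j$ equals $\Gamma(\mathfrak a_j)^{-1}|\Gamma(\tfrac{\mathfrak a_j}{2}+i\xi)|^2$ and decays like ${\rm const}\cdot|\xi|^{\mathfrak a_j-1}e^{-\pi|\xi|}$, so the symbol $e^{-\mathfrak c_j u}\widehat{h_j}(\xi)$ exceeds $t$ on a planar region of area $(1+o(1))\cdot\frac{1}{2\pi\mathfrak c_j}\cdot\frac{\ln^2(1/t)}{\pi}$. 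This is the hard part: one must produce the precise leading constant of a genuinely non-stationary, sub-exponential eigenvalue asymptotics; note that the exponent $2$ (the same for every $j$ and independent of $\mathfrak a_j$) is in any case forced by the shape of (\ref{laptev}).

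\emph{Steps 3--4: tensor product and small deviations.} Writing ${\cal Z}^{(\mathfrak a,\mathfrak b)}_d=(T_1\otimes\cdots\otimes T_{d-1})\otimes T_d$ and applying Theorem~\ref{spectr_asymp} together with Example~1 successively $d-1$ times to the SVFs $\varphi_j(\tau)=\frac{1}{2\pi^2\mathfrak c_j}\ln^2\tau$ (legitimate since, by Theorem~\ref{convolution}, part~{\bf 5}, each partial asymptotic convolution is again a nondecreasing SVF), the telescoping of the $\Gamma$-factors $\frac{\Gamma(3)^d}{\Gamma(2d+1)}=\frac{2^d}{(2d)!}$ gives
\[
{\cal N}(t)\equiv{\cal N}(t,T_1\otimes\cdots\otimes T_d)\sim\frac{2^d}{(2d)!}\prod_{j=1}^d\frac{1}{2\pi^2\mathfrak c_j}\,\ln^{2d}(1/t)=\frac{1}{\pi^{2d}(2d)!\,{\mathfrak C}}\,\ln^{2d}(1/t),\qquad t\to0+.
\]
Finally, Example~3 with $\alpha=2d$ and constant SVF, i.e.\ with $\varphi(1/\ep)={\cal N}(\ep)$, yields
\[
\ln{\bf P}\{\|{\cal Z}^{(\mathfrak a,\mathfrak b)}_d\|\le\ep\}\sim-\frac{2^{2d}}{2d+1}\cdot\frac{\ln^{2d+1}(1/\ep)}{\pi^{2d}(2d)!\,{\mathfrak C}}=-\frac{2^{2d}}{\pi^{2d}(2d+1)!\,{\mathfrak C}}\,\ln^{2d+1}(1/\ep),
\]
which is (\ref{laptev}).
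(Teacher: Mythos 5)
Your proposal is correct and follows essentially the same route as the paper: marginal counting functions ${\cal N}_j(t)\sim(2\pi^2{\mathfrak c}_j)^{-1}\ln^2(1/t)$, then Theorem~\ref{spectr_asymp} with Example~1 applied $d-1$ times, then Example~3 with $\alpha=2d$; all your constants check out. The only difference is in Step~2: the paper takes the marginal spectral asymptotics as a ready-made result of Laptev \cite{La} (not \cite{AGKLS} or \cite{LiS2}), whereas you sketch a Weyl-type heuristic for it --- which is fine as motivation but is not where the paper's argument places the burden of proof.
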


\begin{proof}
 It is shown in \cite{La}, that 
$${\cal N}_j(t)\sim \big(2\pi^2 {\mathfrak c}_j\big)^{-1} \cdot\ln^2(1/t),
\qquad t\to 0+.
$$
Similarly to the Proposition \ref{exp_sheet}, case {\bf 2}, we arrive at (\ref{laptev}).
\end{proof}

The next example deals with conventional Brownian sheet
\begin{equation}\label{sheet}
{\mathbb W}_d(x)=\underset{j=1}{\overset{d}\otimes} W_j(x_j),\qquad x\in [0,1]^d,
\end{equation}
where $W_j$ are Wiener processes.

As mentioned in the Introduction, the logarithmic asymptotics of small deviations 
for the Brownian sheet in $L_2$-norm with respect to the Lebesgue measure was obtained 
in \cite{Cs}. In \cite{KNN}, as a particular case of Proposition 5.1
(see also Theorem 8.2), this result was generalized to the case of absolutely continuous
measure with arbitrary continuous density. Now we are going to obtain the logarithmic
$L_2$-small ball asymptotics in the case of the discrete, degenerately self-similar measure
(see \cite{VSh}, \cite{NSh}). We recall briefly the construction of this measure (for $d=1$).\medskip

Let $0=\alpha_1<\alpha_2<\ldots<\alpha_M<\alpha_{M+1}=1$, $M\ge2$, be a partition
of the segment $[0,1]$. For some $m\in\{1,\dots,M\}$, denote by $a=\alpha_{m+1}-\alpha_m$
the length of $[\alpha_m,\alpha_{m+1}]$. Also we introduce a real number $\delta$ and
a real vector $(\beta_k)$, $k=1,\ldots,M$, such that
\begin{enumerate}
\item $0<\delta<1$;
\item $\beta_1=0$;\qquad $\chi_{\{m=M\}}\cdot\delta+\beta_M=1$;
\item $\beta_k<\beta_{k+1}$,\quad $k=1,\dots,M-1$;\qquad $\delta\beta_M+\beta_m<\beta_{m+1}$
\end{enumerate}
(for $m=M$ the last inequality is irrelevant).

It is shown in \cite[\S 2]{NSh} that under these conditions there exists a unique function $f$
such that ${\cal S}(f)=f$, where ${\cal S}$ is the simplest {\it similarity operator}
\begin{equation*}\label{eq:auxto}
{\cal S}[f](t)=\delta\cdot f(a^{-1}(t-\alpha_m))+
\sum\limits_{k=1}^M\beta_k\cdot\chi_{\,]\alpha_k,\alpha_{k+1}[}(t).
\end{equation*}
Moreover, $f$ increases on $[0,1]$, and $f(0)=0$, $f(1)=1$. The derivative $f'$ in the sence of 
distributions is a discrete probability measure $\mu$ on $[0,1]$ with
a unique singular point $\widehat x=\frac{\alpha_m}{1-a}$. It is called {\it degenerately self-similar 
measure}, generated by the set of parameters $M$, $m$, $\delta$, $(\alpha_k)$ and $(\beta_k)$.

\begin{prop}\label{brownian_sheet}
Let the measure $\mu$ on $[0,1]^d$ be the tensor product:
$$\mu(dx)=\underset{j=1}{\overset{d}\otimes}\mu_j(dx_j),
$$ 
where $\mu_j$, $j=1,\dots,d$, are degenerately self-similar measures, generated, respectively, by the 
sets of parameters
$$ M_j,\quad m_j,\quad \delta_j,\quad (\alpha_k^{(j)}),\quad (\beta_k^{(j)}),\qquad j=1,\dots,d.
$$
Then, as $\ep\to 0+$,
\begin{equation}\label{samopod}
\ln {\bf P}\{\|{\mathbb W}_d\|_{\mu}\le\ep\}\sim 
-\frac {2^d\,{\mathfrak C}}{(d+1)!}\cdot\ln^{d+1}(1/\ep),
\end{equation}
where
$${\mathfrak C}=\prod\limits_{j=1}^d\frac {M_j-1}{\ln(a_j\delta_j)}
$$
(we recall that $a_j=\alpha^{(j)}_{m_j+1}-\alpha^{(j)}_{m_j}$).
\end{prop}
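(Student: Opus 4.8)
The plan is to reduce everything to a one--dimensional spectral question and then run the machinery of \S\S2--3. By the Karhunen--Lo\`eve expansion and the identification of a tensor product of Gaussian fields with the tensor product of the corresponding covariance operators (recalled in the Introduction), one has $\|{\mathbb W}_{d}\|_{\mu}^{2}\overset{d}{=}\sum_{n}\lambda_{n}\xi_{n}^{2}$, where $(\lambda_{n})$ are the eigenvalues of ${\cal T}=\bigotimes_{j=1}^{d}{\cal T}_{j}$ and ${\cal T}_{j}$ is the covariance operator of $W_{j}$ (the integral operator with kernel $\min(s,t)$) acting in $L_{2}([0,1];\mu_{j})$. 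Hence it suffices to compute the counting function ${\cal N}(t)={\cal N}(t,{\cal T})$ and feed it into Example 3 (formula (\ref{smalldev})). The whole weight of the argument lies in the marginal asymptotics of ${\cal N}_{j}(t)={\cal N}(t,{\cal T}_{j})$.

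For each $j$ the measure $\mu_{j}$ is self--similar: apart from its $M_{j}-1$ zeroth--level atoms, the restriction of $\mu_{j}$ to $[\alpha^{(j)}_{m_{j}},\alpha^{(j)}_{m_{j}+1}]$ is the affine image of $\mu_{j}$ with spatial ratio $a_{j}$ and mass ratio $\delta_{j}$. Since the kernel $\min(s,t)$ is itself affinely self--similar, the diagonal block of ${\cal T}_{j}$ over that subinterval is unitarily equivalent to $a_{j}\delta_{j}\,{\cal T}_{j}$ up to a rank--one term, while the complementary part of ${\cal T}_{j}$ has finite rank (it factors through the finite--dimensional space $L_{2}$ of the complement of that subinterval). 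The Ky Fan inequalities then yield a renewal relation
\[
\bigl|{\cal N}_{j}(t)-{\cal N}_{j}\bigl(t/(a_{j}\delta_{j})\bigr)\bigr|\le K_{j}\qquad(t>0),
\]
with $K_{j}$ independent of $t$, whence ${\cal N}_{j}(t)=O(\ln(1/t))$; a sharper count of how many eigenvalues cross a given level per contraction step gives
\[
{\cal N}_{j}(t)\sim c_{j}\ln(1/t),\qquad c_{j}=\frac{M_{j}-1}{\ln(1/(a_{j}\delta_{j}))},\qquad t\to0+,
\]
so that $\mathfrak{C}=\prod_{j}c_{j}$. This sharp one--dimensional spectral asymptotics is what one extracts from the self--similar construction of \cite{NSh} (see also \cite{VSh}), and I would simply quote it; verifying that the finite--rank corrections contribute exactly $M_{j}-1$ new eigenvalues per contraction is the one genuinely non--mechanical point, and everything else relies only on the fact that each $c_{j}\ln(\cdot)$ is an \emph{unbounded} SVF, i.e. we are precisely in the regime for which Theorems \ref{ocenka}--\ref{spectr_asymp} were built.

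Given this, the proof runs parallel to that of Proposition \ref{exp_sheet}. Each $\varphi_{j}(\tau)=c_{j}\ln(\tau)$ is of the form treated in Example 1 with exponent $\alpha_{j}=1$. Writing ${\cal T}=\bigl(\bigotimes_{j=1}^{d-1}{\cal T}_{j}\bigr)\otimes{\cal T}_{d}$ and applying Theorem \ref{spectr_asymp} together with (\ref{conv_pq}) inductively $d-1$ times --- at the step adjoining the $(k{+}1)$st factor the logarithmic exponent grows from $k$ to $k+1$ and the constant is multiplied by $\Gamma(k+1)\Gamma(2)/\Gamma(k+2)=1/(k+1)$, and by Theorem \ref{convolution}, parts {\bf 2} and {\bf 5}, the partial tensor products again have unbounded SVF counting functions, so Theorem \ref{spectr_asymp} keeps applying --- one obtains
\[
{\cal N}(t)\sim\frac{c_{1}\cdots c_{d}}{d!}\,\ln^{d}(1/t)=\frac{\mathfrak{C}}{d!}\,\ln^{d}(1/t),\qquad t\to0+.
\]
Finally, Example 3 with $\alpha=d$ and $\Phi\equiv\mathfrak{C}/d!$, together with $\{\sum_{n}\lambda_{n}\xi_{n}^{2}\le\ep^{2}\}=\{\|{\mathbb W}_{d}\|_{\mu}\le\ep\}$, gives
\[
\ln{\bf P}\{\|{\mathbb W}_{d}\|_{\mu}\le\ep\}\sim-\frac{2^{d}}{d+1}\cdot\frac{\mathfrak{C}}{d!}\,\ln^{d+1}(1/\ep)=-\frac{2^{d}\mathfrak{C}}{(d+1)!}\,\ln^{d+1}(1/\ep),
\]
which is (\ref{samopod}). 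The main obstacle, to repeat, is the sharp marginal count ${\cal N}_{j}(t)\sim c_{j}\ln(1/t)$; once it is in hand, the tensorisation and the passage to small deviations are entirely routine applications of \S\S2--3.
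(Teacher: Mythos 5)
Your proposal is correct and follows essentially the same route as the paper: the marginal asymptotics ${\cal N}_j(t)\sim c_j\ln(1/t)$ is quoted from \cite{NSh}, the tensorisation is handled by Theorem \ref{spectr_asymp} together with Example 1 applied $d-1$ times exactly as in Proposition \ref{exp_sheet}, case {\bf 2}, and the constant in (\ref{samopod}) comes from Example 3 with $\alpha=d$. The only (cosmetic) discrepancy is that you write the marginal constant as $(M_j-1)/\ln(1/(a_j\delta_j))$ whereas the paper's statement and proof use $(M_j-1)/\ln(a_j\delta_j)$; your normalization is the one under which ${\mathfrak C}>0$ and the right-hand side of (\ref{samopod}) is negative, as it must be.
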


\begin{proof}
It is shown in \cite{NSh}, that
$${\cal N}_j(t)\sim \frac {M_j-1}{\ln(a_j\delta_j)}\cdot\ln(1/t), 
\qquad t\to 0+.
$$
Similarly to the Proposition \ref{exp_sheet}, case {\bf 2}, we arrive at (\ref{samopod}).
\end{proof}

\begin{rem}
Note, analogously to Remark 6 in \cite{KNN}, that the replacement of any factor in (\ref{sheet}) by the
Brownian bridge, by the Ornstein--Uhlenbeck process or by similar process does not influence on the relation 
(\ref{samopod}). For instance, the Brownian pillow ${\mathbb B}_d(x)=\underset{j=1}{\overset{d}\otimes}\ B(x_j)$ satisfies, as $\ep\to0$, the relation 
$$\ln {\bf P}\{\|{\mathbb B}_d\|_{\mu}\le\ep\} \sim \ln {\bf P}\{\|{\mathbb W}_d\|_{\mu}\le\ep\}.$$
\end{rem}

Now we consider the {\it isotropically integrated} Brownian sheet 
$$({\mathbb W}_d)_{\mathfrak s}(x)=\underset{j=1}{\overset{d}\otimes} W_{\mathfrak s}(x_j),$$
where
$$W_{\mathfrak s}(t)\equiv W_{\mathfrak s}^{[b_1,\,\dots,\,b_{\mathfrak s}]}(t) =
(-1)^{b_1+\,\dots\,+b_{\mathfrak s}}\underbrace
{\int\limits_{b_{\mathfrak s}}^t\dots\int\limits_{b_1}^ {t_1}}_ {{\mathfrak s}} \
W (s)\ ds\ dt_1\dots$$
(here any $b_k$ equals either zero or one, $t\in[0,1]$; for various $j=1,\dots,d$ the multi-indices
$[b_1,\,\dots,\,b_{\mathfrak s}]$ can differ). 

\begin{prop}
Let a measure $\mu$ on $[0,1]^d$ be as in Proposition $\ref{brownian_sheet}$. 
Then, as $\ep\to 0+$,
\begin{equation}\label{samopod1}
\ln {\bf P}\{\|({\mathbb W}_d)_{\mathfrak s}\|_{\mu}\le\ep\}\sim 
-\frac {2^d\,{\mathfrak C}}{(d+1)!}\cdot\ln^{d+1}(1/\ep),
\end{equation}
where
$${\mathfrak C}=\prod\limits_{j=1}^d\frac {M_j-1}{\ln(a_j\delta_j^{2{\mathfrak s}+1})}.
$$
\end{prop}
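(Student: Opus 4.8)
The plan is to reduce the statement to the one-dimensional counting functions of $W_{\mathfrak s}$ and then repeat, almost verbatim, the computation of Proposition~\ref{brownian_sheet}. Since $({\mathbb W}_d)_{\mathfrak s}=\underset{j=1}{\overset{d}\otimes}W_{\mathfrak s}(x_j)$, the eigenvalues of the covariance operator of $({\mathbb W}_d)_{\mathfrak s}$ in $L_2([0,1]^d,\mu)$ are the products $\lambda^{(1)}_{n_1}\cdots\lambda^{(d)}_{n_d}$ of the eigenvalues $\lambda^{(j)}_n$ of the covariance operator of $W_{\mathfrak s}$ (with its own base points $[b_1,\dots,b_{\mathfrak s}]$) acting in $L_2([0,1],\mu_j)$. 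Hence, by the Karhunen--Lo\`eve representation and by (\ref{vero})--(\ref{ur}), the behaviour of ${\bf P}\{\|({\mathbb W}_d)_{\mathfrak s}\|_\mu\le\ep\}$ is governed by the marginal counting functions ${\cal N}_j$ alone, and the proposition reduces to the one-dimensional claim
$${\cal N}_j(t)\sim c_j\,\ln(1/t),\qquad t\to0+,\qquad c_j=\frac{M_j-1}{\ln(a_j\delta_j^{2{\mathfrak s}+1})}.$$

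Granting this claim, it only remains to rerun the bookkeeping of Propositions~\ref{exp_sheet} and~\ref{brownian_sheet}. Each $\varphi_j(\tau)=c_j\ln(\tau)$ is an unbounded nondecreasing SVF of the type treated in Example~1, with exponent $\alpha=1$ and constant slowly varying factor. Applying Theorem~\ref{spectr_asymp} together with Example~1 successively $d-1$ times, and observing that the Beta-function constants telescope, $\prod_{k=2}^{d}\frac{\Gamma(k)\,\Gamma(2)}{\Gamma(k+1)}=\frac1{d!}$, one gets ${\cal N}(t)\sim\frac1{d!}\big(\prod_{j=1}^{d}c_j\big)\ln^{d}(1/t)$. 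Feeding this into Example~3 (with $\alpha=d$) gives $\ln{\bf P}\{\sum_n\lambda_n\xi_n^2\le\ep^2\}\sim-\frac{2^d}{(d+1)!}\big(\prod_{j=1}^{d}c_j\big)\ln^{d+1}(1/\ep)$, which is precisely (\ref{samopod1}) with ${\mathfrak C}=\prod_{j=1}^{d}c_j$.

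To prove the marginal claim I would use the self-similarity of $\mu_j$ near its unique singular point $\widehat x_j=\alpha^{(j)}_{m_j}/(1-a_j)$: under the affine contraction $T_j(x)=\alpha^{(j)}_{m_j}+a_j x$ the restriction of $\mu_j$ to the $m_j$-th cell equals $\delta_j\,(T_j)_*\mu_j$, so at level $n$ one sees a copy of $\mu_j$ contracted spatially by $a_j^n$ and in mass by $\delta_j^n$, carrying $M_j-1$ new atoms. The relevant facts about $W_{\mathfrak s}$ are: (i) its increments about an interior point are $\pm\int_0^h\big(W(\widehat x_j+u)-W(\widehat x_j)\big)\,du$-type quantities that do not depend on the base points (this is why (\ref{samopod1}) is insensitive to $[b_1,\dots,b_{\mathfrak s}]$), and the corresponding remainder $R$ satisfies $E\big[R(\widehat x_j+h)R(\widehat x_j+h')\big]\asymp|h|^{2{\mathfrak s}+1}$ for $h\asymp h'$, reflecting the H\"older-$({\mathfrak s}+\frac12)$ local regularity of $W_{\mathfrak s}$; and (ii) the base points enter only through a finite-dimensional polynomial part of $W_{\mathfrak s}$, which by the Ky~Fan inequalities shifts ${\cal N}_j(t)$ by $O(1)$, negligible against $\ln(1/t)$. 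Combining the $|h|^{2{\mathfrak s}+1}$ local scaling with the spatial and mass scalings of $\mu_j$, one finds that on the level-$n$ self-similar subspace the covariance operator of $W_{\mathfrak s}$ in $L_2(\mu_j)$ coincides, up to a relatively small error, with the level-$0$ operator multiplied by a fixed contraction factor $\varrho_j<1$ which reduces to $a_j\delta_j$ when ${\mathfrak s}=0$; matching the Wiener case of \cite{NSh}, this factor equals $a_j\delta_j^{2{\mathfrak s}+1}$. A renewal argument --- each level adds $M_j-1$ eigenvalues and divides them by $\varrho_j$ --- made precise exactly as in \cite[\S 2]{NSh}, then gives ${\cal N}_j(\varrho_j^{-n})\sim(M_j-1)\,n$, that is, ${\cal N}_j(t)\sim\frac{M_j-1}{\ln(1/\varrho_j)}\ln(1/t)$, as required.

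The main obstacle is precisely this last step: $W_{\mathfrak s}$ is not globally self-similar --- its coarse-scale polynomial part and the fixed base points break exact self-similarity --- so the self-similarity of its covariance operator in $L_2(\mu_j)$ holds only approximately, and one must show, uniformly over all levels $n$, that the non-self-similar remainder contributes $o({\cal N}_j(t))=o(\ln(1/t))$; a Weyl/Ky~Fan-type perturbation estimate of this kind, arranged as in \cite{NSh}, is the technical heart of the proof. Once it is in place, the renewal computation and the passage to the $d$-dimensional sheet via Theorem~\ref{spectr_asymp} are routine, following Proposition~\ref{exp_sheet}, case~{\bf 2}, and Proposition~\ref{brownian_sheet} word for word.
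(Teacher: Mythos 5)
Your reduction and the tensorization bookkeeping follow exactly the paper's route and are correct: once one knows ${\cal N}_j(t)\sim c_j\ln(1/t)$ with $c_j=(M_j-1)/\ln(a_j\delta_j^{2{\mathfrak s}+1})$, applying Theorem \ref{spectr_asymp} together with Example 1 successively $d-1$ times (the Beta constants telescope to $1/d!$) and then Example 3 with $\alpha=d$ yields (\ref{samopod1}). That is in fact all the paper does: its proof is the single sentence ``the same way as Proposition \ref{brownian_sheet}'', and there the marginal counting-function asymptotics is not proved but quoted from \cite{NSh}.

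The gap is in your attempted self-contained derivation of the marginal claim, and it is twofold. First, the uniform-in-level control of the non-self-similar remainder (the random polynomial part of $W_{\mathfrak s}$, the dependence on the base points, the fact that the covariance is only locally self-similar near $\widehat x_j$) is precisely the content of the higher-order result of \cite{NSh}; labelling it ``the technical heart'' and deferring it to ``a Weyl/Ky Fan-type perturbation estimate arranged as in \cite{NSh}'' leaves the one-dimensional asymptotics unproved rather than proved. Second, and more concretely, your scaling heuristic does not produce the constant you need. If the covariance of the local remainder scales as $|h|^{2{\mathfrak s}+1}$ under the spatial contraction by $a_j$ while the measure scales in mass by $\delta_j$, then the Rayleigh quotient of the covariance operator normalized in $L_2(\mu_j)$ (quadratic form picks up $a_j^{2{\mathfrak s}+1}\delta_j^2$, the $L_2(\mu_j)$-norm picks up $\delta_j$) contracts per level by $a_j^{2{\mathfrak s}+1}\delta_j$, not by $a_j\delta_j^{2{\mathfrak s}+1}$; these agree only at ${\mathfrak s}=0$, so ``matching the Wiener case'' cannot distinguish them, and your sketch as written points to a different constant than the one in the statement. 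To close the argument you must either carry out the level-$n$ scaling computation honestly and exhibit which factor actually governs the renewal step, or simply cite the theorem of \cite{NSh} for the order-$(2{\mathfrak s}+2)$ problem with degenerately self-similar weight, which is what the paper's one-line proof implicitly does.
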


\begin{proof}
This statement can be proved in the same way as Proposition \ref{brownian_sheet}.
\end{proof}

Finally, we can consider the fields-products corresponding to
essentially different marginal processes. We restrict ourselves
to a single example. On $[0,1]^2$ consider the Gaussian field
${\mathfrak R}(x_1)\otimes Z^{({\mathfrak a}, {\mathfrak b})}(x_2)$,
where ${\mathfrak R}$ is a stationary Gaussian process with zero mean-value and 
the spectral density $h_{\mathfrak R}(\xi)=\frac 1{\Gamma(|\xi|)}$.

\begin{prop} 
Let ${\mathfrak a}>0$, and ${\mathfrak c}\equiv 2{\mathfrak b}-{\mathfrak a}+1>0$. Then
$$\ln {\bf P}\{\|{\mathfrak R}\otimes Z^{({\mathfrak a}, {\mathfrak b})}\|\le\ep\}\sim
-\frac 1{6\pi^2{\mathfrak c}}\cdot\frac {\ln^4(1/\ep)}{\ln(\ln(\ln(1/\ep)))}.
$$
\end{prop}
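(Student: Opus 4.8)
The plan is to identify the eigenvalue counting functions of the two marginal processes, compute their asymptotic convolution, and then apply the small ball estimate of Example~3. First I would handle the marginal ${\mathfrak R}$: since $h_{\mathfrak R}(\xi)=1/\Gamma(|\xi|)$ we have $\ln(1/h_{\mathfrak R}(\xi))=\ln\Gamma(|\xi|)\sim\xi\ln(\xi)$ as $\xi\to+\infty$ by Stirling's formula, so this is a stationary process of Widom type with ${\cal G}(\xi)=\ln\Gamma(\xi)$ convex for large $\xi$ and ${\cal G}(\xi)/\xi\to+\infty$. Hence it falls precisely into case~{\bf 4} of Proposition~\ref{exp_sheet} (equivalently, the superexponential regime): with $\Phi(\xi)=\ln(\xi)$ the construction there yields $\phi(t)$ with $\ln(\phi(t))\sim\ln(\ln(t))$, so
$${\cal N}_{\mathfrak R}(t)\sim\frac{\ln(1/t)}{2\ln(\ln(\ln(1/t)))},\qquad t\to0+.$$
Actually it is cleaner to note that $\ln({\cal G}(\xi))\sim\ln(\xi)$, which is the borderline of case {\bf 5}; a direct application of \eqref{Widom2} solving ${\cal G}(\xi)=2n\ln(n/\xi)$ gives $\xi(n)\sim 2n/\ln(n)\cdot(1+o(1))$ wait — more carefully, the equation $\xi\ln\xi\sim 2n\ln(n/\xi)$ forces $\xi(n)=n\cdot o(1)$ with $\ln(1/\lambda_n)\sim{\cal G}(\xi(n))$, and inverting yields the stated counting function. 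The second marginal $Z^{({\mathfrak a},{\mathfrak b})}$ has, by \cite{La} as recalled in Proposition~\ref{hom_sheet}, the counting function ${\cal N}_2(t)\sim(2\pi^2{\mathfrak c})^{-1}\ln^2(1/t)$.

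Next I would invoke Theorem~\ref{spectr_asymp}: the counting function of the tensor product ${\mathfrak R}\otimes Z^{({\mathfrak a},{\mathfrak b})}$ is the asymptotic convolution $\varphi\star\widetilde\varphi$ of $\varphi(\tau)={\cal N}_{\mathfrak R}$-profile and $\widetilde\varphi(\tau)=(2\pi^2{\mathfrak c})^{-1}\ln^2(\tau)$. Here $\widetilde\varphi$ is of the form $\ln^\beta(\tau)\Psi(\ln\tau)$ with $\beta=2$ and $\Psi\equiv(2\pi^2{\mathfrak c})^{-1}$, while $\varphi(\tau)=\ln(\tau)/(2\ln(\ln(\ln\tau)))$ is of the form $\ln^\alpha(\tau)\Phi(\ln\tau)$ with $\alpha=1$ and $\Phi(s)=1/(2\ln(\ln s))$ a SVF. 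Both exponents $\alpha,\beta$ are positive, so Example~1 applies directly:
$$(\varphi\star\widetilde\varphi)(\tau)\sim\frac{\Gamma(2)\Gamma(3)}{\Gamma(4)}\varphi(\tau)\widetilde\varphi(\tau)=\frac{2}{6}\cdot\frac{\ln(\tau)}{2\ln(\ln(\ln\tau))}\cdot\frac{\ln^2(\tau)}{2\pi^2{\mathfrak c}}=\frac{\ln^3(\tau)}{12\pi^2{\mathfrak c}\ln(\ln(\ln\tau))}.$$
Thus the eigenvalue counting function of the field is ${\cal N}(t)\sim\frac{\ln^3(1/t)}{12\pi^2{\mathfrak c}\,\ln(\ln(\ln(1/t)))}$, which has exactly the shape $\ln^p(1/t)\Phi(\ln(1/t))$ with $p=3$ and $\Phi(s)=1/(12\pi^2{\mathfrak c}\ln(\ln s))$ a SVF.

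Finally I would feed this into Example~3 with $\alpha=3$ and the SVF $\Phi$ above: by \eqref{smalldev}, with $\varphi(1/\ep)={\cal N}(\,\cdot\,)$-profile evaluated at $1/\ep$,
$$\ln{\bf P}\{\|{\mathfrak R}\otimes Z^{({\mathfrak a},{\mathfrak b})}\|\le\ep\}\sim-\frac{2^3}{3+1}\varphi(1/\ep)\ln(1/\ep)=-2\cdot\frac{\ln^3(1/\ep)}{12\pi^2{\mathfrak c}\,\ln(\ln(\ln(1/\ep)))}\cdot\ln(1/\ep),$$
which simplifies to $-\frac{1}{6\pi^2{\mathfrak c}}\cdot\frac{\ln^4(1/\ep)}{\ln(\ln(\ln(1/\ep)))}$, the claimed asymptotics. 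The main obstacle I anticipate is not any of the convolution bookkeeping — that is routine given Theorems~\ref{convolution}, \ref{spectr_asymp} and Examples~1, 3 — but rather pinning down the precise triple-logarithm in ${\cal N}_{\mathfrak R}(t)$: one must be careful that ${\cal G}(\xi)=\ln\Gamma(\xi)$, whose logarithm $\ln\ln\Gamma(\xi)\sim\ln\xi+\ln\ln\xi$ behaves like case~{\bf 5} with $q=1$ but with an extra slowly varying correction, so that the denominator $\ln(\ln(\ln(1/t)))$ (rather than $\ln(\ln(1/t))$) emerges; this requires applying Widom's asymptotics \eqref{Widom2} to the transcendental equation ${\cal G}(\xi)=2n\ln(n/\xi)$ and carefully inverting, paralleling the particular-case computation in part~{\bf 4} of Proposition~\ref{exp_sheet} with $\Phi_j(\xi)=\ln(\xi)$.
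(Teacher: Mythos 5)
Your proposal is correct and follows the paper's own route exactly: the marginal counting functions come from Proposition \ref{exp_sheet}, part {\bf 4} (particular case $C_j=1$, $p=1$, giving $\ln(1/t)/(2\ln(\ln(\ln(1/t))))$) and from Proposition \ref{hom_sheet} (giving $(2\pi^2{\mathfrak c})^{-1}\ln^2(1/t)$), they are convolved via Theorem \ref{spectr_asymp} and Example 1, and the result follows from (\ref{smalldev}) with $\alpha=3$. All constants check out; the only blemish is the inconclusive digression about whether case {\bf 4} or the borderline of case {\bf 5} governs ${\mathfrak R}$, but the case-{\bf 4} formula you ultimately apply is the correct one.
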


\begin{proof}
The asymptotics of marginal counting functions are calculated in Proposition \ref{exp_sheet}, 
part {\bf 4}, and in Proposition \ref{hom_sheet}. The result follows from Example 1 and (\ref{smalldev}).
\end{proof}

\bigskip
We are grateful to M.A.~Lifshits for useful comments and to Ya.Yu.~Nikitin for the constant encouragement.
We also thank T.~K\"uhn who kindly provided us with the text of preprint \cite{Ku}.\medskip

\end{document}